\newtheorem{thm}{Theorem}[section]
\newtheorem{lem}{Lemma}[section]
\newtheorem{cor}{Corollary}[section]
\newtheorem{rem}{Remark}[section]
\begin{document}

\title{Shadowing, chain transitive sets with nonempty interior and attractor boundaries}
\author{Noriaki Kawaguchi}
\subjclass[2020]{37B35, 37B65, 37C20, 37D05}
\keywords{shadowing, L-shadowing, chain, attractor, boundary}
\address{Department of Mathematical and Computing Science, School of Computing, Institute of Science Tokyo, 2-12-1 Ookayama, Meguro-ku, Tokyo 152-8552, Japan}
\email{gknoriaki@gmail.com}

\begin{abstract}
We examine certain phenomena in $C^1$-dynamics from a viewpoint of shadowing and improve a known result on hyperbolic sets. We also review a result on the stability of attractor boundaries from the same viewpoint and derive several additional results.
\end{abstract}

\maketitle

\markboth{NORIAKI KAWAGUCHI}{Shadowing, chain transitive sets with nonempty interior and attractor boundaries}

\section{Introduction}

{\em Shadowing} is an important concept in the topological theory of dynamical systems (see \cite{AH,Pi2} for background). It was introduced in the study of hyperbolic differentiable dynamics \cite{An,B} and generally refers to the situation in which coarse orbits, or {\em pseudo-orbits}, can be closely approximated by true orbits. The shadowing lemma states that a $C^1$-diffeomorphism of a closed differentiable manifold exhibits shadowing near its hyperbolic sets and plays a crucial role in the study of differentiable dynamics. Noteworthy is that the shadowing is a generic property of homeomorphisms of a closed differentiable manifold \cite{PP} and so plays also a significant role in the study of $C^0$-generic dynamics.

Let $M$ be a closed differentiable manifold. In \cite{ABD}, it is conjectured that  for a $C^1$-generic diffeomorphism $f\colon M\to M$, if the non-wandering set $\Omega(f)$ has nonempty interior, then $f$ is transitive (see also \cite{ACCO,F,Po}). It is shown in \cite{ABD} that
\begin{itemize}
\item[(1)] for a $C^1$-diffeomorphism $f\colon M\to M$, if a hyperbolic set $\Lambda$ for $f$ satisfies $\Lambda\subset\Omega(f)$ and has nonempty interior, then $\Lambda=M$ and thus $f$ is Anosov,
\item[(2)] for a generic $C^1$-diffeomorphism $f\colon M\to M$, if $f$ is tame, then any homoclinic class $H$ with nonempty interior satisfies $H=M$ and thus, in such a case, $f$ is transitive.
\end{itemize}
A result in \cite[Chapter 3]{Pi1}, seemingly unrelated to the above results, is that for a $C^0$-generic homeomorphism $f\colon M\to M$, the boundary of each attractor for $f$ is Lyapunov stable (see also \cite{H}). In this paper, we give a consistent account of these phenomena from a viewpoint of shadowing.

Given a homeomorphism of a compact metric space, we show that any chain transitive set with nonempty interior and shadowing on its neighborhood coincides with an initial and terminal chain component. If an additional shadowing condition (so-called L-shadowing) is satisfied, then the chain transitive set is clopen in the phase space. As a consequence of the shadowing lemma, we improve the above result (1). We also show that for any attractor with shadowing on its neighborhood, its boundary is chain stable, and derive several corollaries.
 
Throughout, $X$ denotes a compact metric space endowed with a metric $d$. We begin by defining shadowing and L-shadowing on a subset. Let $f\colon X\to X$ be a continuous map. For $\delta>0$, a finite sequence $(x_i)_{i=0}^k$ of points in $X$, where $k\ge1$, is called a {\em $\delta$-chain} of $f$ if
\[
\sup_{0\le i\le k-1}d(f(x_i),x_{i+1})\le\delta.
\]
For a subset $S$ of $X$, we say that $f$ has {\em shadowing} on $S$ if for any $\epsilon>0$, there is $\delta>0$ such that every $\delta$-chain $(x_i)_{i=0}^k$ of $X$
with $\{x_i\colon 0\le i\le k\}\subset S$ satisfies
\[
\sup_{0\le i\le k}d(x_i,f^i(x))\le\epsilon
\]
for some $x\in X$. In such a case, $(x_i)_{i=0}^k$ is said to be {\em $\epsilon$-shadowed} by $x$. The definition of L-shadowing can be found in \cite{ACCV}. Let $f\colon X\to X$ be a homeomorphism. For $\delta>0$, a bi-infinite sequence $(x_i)_{i\in\mathbb{Z}}$ of points in $X$ is called a {\em $\delta$-limit-pseudo orbit} of $f$ if
\[
\sup_{i\in\mathbb{Z}}d(f(x_i),x_{i+1})\le\delta
\]
and
\[
\lim_{i\to-\infty}d(f(x_i),x_{i+1})=\lim_{i\to+\infty}d(f(x_i),x_{i+1})=0.
\]
For a subset $S$ of $X$, we say that $f$ has {\em L-shadowing} on $S$ if for any $\epsilon>0$, there is $\delta>0$ such that every $\delta$-limit-pseudo orbit $(x_i)_{i\in\mathbb{Z}}$ of $f$ with $\{x_i\colon i\in\mathbb{Z}\}\subset S$ satisfies
\[
\sup_{i\in\mathbb{Z}}d(x_i,f^i(x))\le\epsilon
\]
and
\[
\lim_{i\to-\infty}d(x_i,f^i(x))=\lim_{i\to+\infty}d(x_i,f^i(x))=0
\]
for some $x\in X$. In such a case, $(x_i)_{i\in\mathbb{Z}}$ is said to be {\em $\epsilon$-limit shadowed} by $x$.

{\em Chain recurrent set} and {\em chain components} are basic objects for a global understanding of dynamical systems. The definitions are as follows. Let $f\colon X\to X$ be a continuous map. Given $x,y\in X$, the notation $x\rightarrow y$ means that for every $\delta>0$, there is a $\delta$-chain $(x_i)_{i=0}^k$ of $f$ with $x_0=x$ and $x_k=y$. We say that $f$ is {\em chain transitive} if $x\rightarrow y$ for all $x,y\in X$. The {\em chain recurrent set} $CR(f)$ for $f$ is defined by
\[
CR(f)=\{x\in X\colon x\rightarrow x\}.
\]
We define a relation $\leftrightarrow$ in
\[
CR(f)^2=CR(f)\times CR(f)
\]
by: for all $x,y\in CR(f)$, $x\leftrightarrow y$ if and only if $x\rightarrow y$ and $y\rightarrow x$. Note that $\leftrightarrow$ is a closed equivalence relation in $CR(f)^2$ and satisfies $x\leftrightarrow f(x)$ for all $x\in CR(f)$. An equivalence class $C$ of $\leftrightarrow$ is called a {\em chain component} for $f$. We denote by $\mathcal{C}(f)$ the set of chain components for $f$. The basic properties of chain components are the following
\begin{itemize}
\item $CR(f)=\bigsqcup_{C\in\mathcal{C}(f)}C$, a disjoint union,
\item $C$ is closed in $X$ and satisfies $f(C)=C$ for all $C\in\mathcal{C}(f)$,
\item $f|_C\colon C\to C$ is chain transitive for all $C\in\mathcal{C}(f)$.
\end{itemize}

Given a continuous map $f\colon X\to X$, a subset $S$ of $X$ is said to be {\em $f$-invariant} if $f(S)\subset S$. A closed $f$-invariant subset $S$ of $X$ is said to be {\em chain stable} if for any $\epsilon>0$, there is $\delta>0$ such that every $\delta$-chain $(x_i)_{i=0}^k$ of $f$ with $x_0\in S$ satisfies $d(x_k,S)\le\epsilon$. We easily that for a closed $f$-invariant subset $S$ of $X$, the following conditions are equivalent
\begin{itemize}
\item $S$ is chain stable,
\item $x\rightarrow y$ implies $y\in S$ for all $x\in S$ and $y\in X$.
\end{itemize}
Following \cite{AHK}, we say that a chain component $C\in\mathcal{C}(f)$ is {\em terminal} if $C$ is chain stable. We denote by $\mathcal{C}_{\rm ter}(f)$ the set of terminal chain  components for $f$. When $f$ is a homeomorphism, a chain component $C\in\mathcal{C}(f)$ is said to be {\em initial} if $C\in\mathcal{C}_{\rm ter}(f^{-1})$. We denote by $\mathcal{C}_{\rm int}(f)$ the set of initial chain components for $f$.

For a subset $S$ of $X$ and $r>0$, we denote by $B_r(S)$ the closed $r$-neighborhood of $S$:
\[
B_r(S)=\{x\in X\colon d(x,S)\le r\}
\]
where $d(x,S)$ is the distance of $x$ from $S$:
\[
d(x,S)=\inf_{y\in S}d(x,y).
\]
For a subset $S$ of $X$, we denote by $\overline{S}$, ${\rm int}[S]$, and $\partial S$ the closure, the interior, and the boundary of $S$, respectively. Note that
\[
\partial S=\overline{S}\setminus{\rm int}[S].
\]

The first result of this paper is the following.

\begin{thm}
Let $f\colon X\to X$ be a homeomorphism and let $\Lambda$ be a closed $f$-invariant subset of $X$. If
\begin{itemize}
\item ${\rm int}[\Lambda]\ne\emptyset$,
\item $f|_\Lambda\colon\Lambda\to\Lambda$ is chain transitive,
\item $f$ has shadowing and L-shadowing on $B_b(\Lambda)$ for some $b>0$,
\end{itemize}
then $\Lambda$ is clopen in $X$; therefore, if furthermore $X$ is connected, then $X=\Lambda$ and $f$ is a mixing homeomorphism.
\end{thm}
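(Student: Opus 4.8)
The plan is to reduce the whole statement to the single assertion that $\Lambda$ is open: since $\Lambda$ is closed by hypothesis this gives ``clopen'', and if $X$ is connected a nonempty clopen set must be all of $X$, after which mixing follows from shadowing. Two preliminary observations. First, $f(\Lambda)=\Lambda$: if $x\in\Lambda\setminus f(\Lambda)$ then $d(x,f(\Lambda))>0$ ($f(\Lambda)$ being compact), so no sufficiently fine chain of $f|_\Lambda$ can end at $x$, contradicting chain transitivity; hence $f^{j}(\Lambda)=\Lambda$ for all $j\in\mathbb Z$. Second, using uniform continuity of $f$ and $f^{-1}$ one checks in the usual way that $f^{-1}$ also has shadowing and L-shadowing on $B_b(\Lambda)$ and that $f^{-1}|_\Lambda$ is chain transitive, so all the hypotheses are symmetric under $f\leftrightarrow f^{-1}$.

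Phase 1 (the interior forces $\Lambda$ to be an initial and terminal chain component). Fix $p\in\mathrm{int}[\Lambda]$ and $r\in(0,b]$ with $B_r(p)\subset\Lambda$. I would show $\Lambda$ is chain stable. Given $\epsilon\in(0,r)$, pick $\delta>0$ witnessing shadowing on $B_b(\Lambda)$ for $\epsilon$, small enough (using $f(\Lambda)=\Lambda$ and uniform continuity) that the point following an $\epsilon$-close point in a $\delta$-chain still lies in $B_b(\Lambda)$. If some $\delta$-chain $(x_i)_{i=0}^{k}$ with $x_0\in\Lambda$ had $d(x_j,\Lambda)>\epsilon$ at the first such index $j$, then $x_0,\dots,x_j\in B_b(\Lambda)$; prepending a $\delta$-chain from $p$ to $x_0$ inside $\Lambda$ (chain transitivity of $f|_\Lambda$) gives a $\delta$-chain in $B_b(\Lambda)$ from $p$ to $x_j$, which shadowing tracks by some $w$ with $d(w,p)\le\epsilon<r$; then $w\in\Lambda$, so $f^{m}(w)\in\Lambda$ for every $m$, and $d(x_j,f^{m}(w))\le\epsilon$ forces $d(x_j,\Lambda)\le\epsilon$, a contradiction. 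Thus $\Lambda$ is chain stable, hence a terminal chain component (it is a single chain component because any point chain equivalent to $p$ lies in $\Lambda$ by chain stability); by the $f\leftrightarrow f^{-1}$ symmetry it is also initial.

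Phase 2 (L-shadowing upgrades this to openness). Chain stability of $\Lambda$ for $f$ already gives Lyapunov stability: if $d(x,\Lambda)$ is small and $a\in\Lambda$ is nearest, then $(a,f(x),f^2(x),\dots,f^i(x))$ is a fine chain starting in $\Lambda$, so $d(f^i(x),\Lambda)\le\epsilon$ for all $i\ge0$; hence for $d(x,\Lambda)$ small enough the forward orbit of $x$ stays in $B_b(\Lambda)$. Suppose $\Lambda$ is not open and pick $z_n\notin\Lambda$ with $z_n\to z\in\Lambda$; for $n$ large the forward orbit of $z_n$ lies in $B_b(\Lambda)$. Moreover the $\omega$-limit set $\omega(z_n)$ is not contained in $\Lambda$: if it were, $z_n$ would admit arbitrarily fine chains of $f$ ending in $\Lambda$, equivalently $\Lambda$ would admit arbitrarily fine chains of $f^{-1}$ ending at $z_n$, forcing $z_n\in\Lambda$ because $\Lambda$ is chain stable for $f^{-1}$ (initial). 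Now let $w_0\in\Lambda$ be nearest to $z_n$, set $a=f^{-1}(w_0)\in\Lambda$, and let $(y_i)_{i\in\mathbb Z}$ be $y_i=f^i(a)$ for $i\le0$ and $y_i=f^{i-1}(z_n)$ for $i\ge1$. Its only nonzero consecutive error is $d(f(y_0),y_1)=d(f(a),z_n)=d(z_n,\Lambda)$, which is $\le\delta$ for $n$ large, so $(y_i)$ is a $\delta$-limit-pseudo orbit lying in $B_b(\Lambda)$; L-shadowing yields $w$ with $d(y_i,f^i(w))\to0$ as $i\to\pm\infty$. From the indices $i\le0$ this gives $\alpha(w)\subset\Lambda$, and from $i\ge1$ it gives $\omega(w)=\omega(z_n)\not\subset\Lambda$. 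Choosing $q\in\omega(w)\setminus\Lambda$: the backward orbit of $w$ converges to $\Lambda$, so there are arbitrarily fine chains from $\Lambda$ to $w$, and $q\in\omega(w)$ gives arbitrarily fine chains from $w$ to $q$; concatenating, a point of $\Lambda$ chains to $q$, so $q\in\Lambda$ by chain stability (terminality) of $\Lambda$ — contradiction. Hence $\Lambda$ is open, so clopen.

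Phase 3 and the main difficulty. If $X$ is connected the nonempty clopen set $\Lambda$ equals $X$, so $f$ is chain transitive and has shadowing on $B_b(\Lambda)=X$; shadowing decomposes such an $f$ into finitely many clopen sets cyclically permuted by $f$, the appropriate return power being topologically mixing on each, and connectedness of $X$ makes the decomposition trivial, so the homeomorphism $f$ is topologically mixing. I expect Phase 2 to be the crux. Plain shadowing is useless against a point like $z_n$, since the orbit of $z_n$ is a genuine orbit (shadowing it returns the orbit itself) and shadowing in any case only places orbits $\epsilon$-close to the closed set $\Lambda$, never inside it; the idea is to feed L-shadowing a pseudo orbit that is deliberately \emph{not} an orbit — lying in $\Lambda$ in the far past and copying $z_n$ in the future — so that the limit-shadowing orbit $w$ inherits $\alpha(w)\subset\Lambda$ from one end and $\omega(w)\not\subset\Lambda$ from the other, a combination forbidden once $\Lambda$ is known to be both initial and terminal. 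Arranging that the pseudo orbit lies inside the neighbourhood $B_b(\Lambda)$ on which shadowing is assumed (this is exactly why Lyapunov stability is extracted first) and checking the symmetric uses of $f^{-1}$ are the routine technical points.
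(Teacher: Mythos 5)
Your proposal is correct and follows essentially the same route as the paper: Phase 1 reproduces Lemma 4.3 (shadowing on $B_b(\Lambda)$ plus nonempty interior forces $\Lambda$ to be a chain-stable chain component for both $f$ and $f^{-1}$), and Phase 2 is the paper's ``direct proof'' at the end of Section 4, which glues the past of a point of $\Lambda$ to the future of a nearby point and applies L-shadowing to produce an orbit with $\alpha$-limit in $\Lambda$, trapping the nearby point via initiality/terminality. The only cosmetic difference is that the paper's primary write-up instead routes through Lemma 2.4 ($\Lambda$ clopen in $CR(f)$) and Lemma 4.4, but your argument matches the alternative proof the author also gives.
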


\begin{rem}
\normalfont
A continuous map $f\colon X\to X$ is said to be
\begin{itemize}
\item {\em mixing} if for any nonempty open subsets $U,V$ of $X$, there exists $j\ge0$ such that $f^i(U)\cap V\ne\emptyset$ for all $i\ge j$,
\item {\em chain mixing} if for any $x,y\in X$ and $\delta>0$, there exists $j\ge1$ such that for each $k\ge j$, there is a $\delta$-chain $(x_i)_{i=0}^k$ of $f$ with $x_0=x$ and $x_k=y$.
\end{itemize}
If $f$ is mixing, then $f$ is chain mixing, and the converse holds when $f$ has shadowing on $X$. By \cite[Corollary 14]{RW}, we know that whenever $X$ is connected, $f$ satisfies $X=CR(f)$ if and only if $f$ is chain transitive if and only if $f$ is chain mixing.  
\end{rem}

We state a corollary of Theorem 1.1. We say that $X$ is {\em locally connected} if for any $x\in X$ and any open subset $U$ of $X$ with $x\in U$, there is an open connected subset $V$ of $X$ such that $x\in V\subset U$.

\begin{cor}
Let $f\colon X\to X$ be a homeomorphism and let $\Lambda$ be a closed $f$-invariant subset of $X$. If
\begin{itemize}
\item $X$ is connected and locally connected,
\item ${\rm int}[\Lambda]\cap{\rm int}[CR(f)]\ne\emptyset$ (in particular, $\Lambda\subset CR(f)$ and ${\rm int}[\Lambda]\ne\emptyset$),
\item $f$ has shadowing and L-shadowing on $B_b(\Lambda)$ for some $b>0$,
\end{itemize}
then, $X=\Lambda$ and $f$ is a mixing homeomorphism.
\end{cor}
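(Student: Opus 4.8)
The plan is to deduce Corollary 1.2 from Theorem 1.1 applied to a suitable chain component contained in $\Lambda$. Fix a point $p\in\mathrm{int}[\Lambda]\cap\mathrm{int}[CR(f)]$ and let $C\in\mathcal{C}(f)$ be the chain component containing $p$; recall that $C$ is closed, $f(C)=C$, and $f|_C\colon C\to C$ is chain transitive. I would try to verify the hypotheses of Theorem 1.1 with $C$ in the role of $\Lambda$: chain transitivity of $f|_C$ is automatic, so it remains to check that $\mathrm{int}[C]\ne\emptyset$ and that $f$ has shadowing and L-shadowing on $B_{b'}(C)$ for some $b'>0$. Given these, Theorem 1.1 shows $C$ is clopen in $X$, hence $X=C$ by connectedness of $X$ and $f$ is a mixing homeomorphism; since the argument will also give $C\subseteq\Lambda$, it follows that $\Lambda=X$.

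To see that $\mathrm{int}[C]\ne\emptyset$ I would use only the local connectedness of $X$. Pick a connected open set $W$ with $p\in W\subseteq\mathrm{int}[\Lambda]\cap\mathrm{int}[CR(f)]$, and claim that $W\subseteq C$. Let $w,w'\in W$ and $\delta>0$. Since $W$ is connected, there are $w=w_0,w_1,\dots,w_n=w'$ in $W$ with $d(w_j,w_{j+1})\le\delta/2$ for $0\le j\le n-1$; since $w_j\in CR(f)$, there is a $(\delta/2)$-chain from $w_j$ to $w_j$, and replacing its terminal point by $w_{j+1}$ makes it a $\delta$-chain from $w_j$ to $w_{j+1}$. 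Concatenating over $j$ gives a $\delta$-chain from $w$ to $w'$, so $w\rightarrow w'$, and by symmetry $w'\rightarrow w$; thus $w\leftrightarrow w'$ and $W\subseteq C$. In particular $W\subseteq C\cap\mathrm{int}[\Lambda]$ and $\mathrm{int}[C]\supseteq W\ne\emptyset$.

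It then remains to establish $C\subseteq\Lambda$; once this holds, $B_b(C)\subseteq B_b(\Lambda)$, so every $\delta$-chain and every $\delta$-limit-pseudo orbit with points in $B_b(C)$ has its points in $B_b(\Lambda)$, and the shadowing and L-shadowing of $f$ on $B_b(\Lambda)$ pass to $B_b(C)$, completing the reduction to Theorem 1.1. For the inclusion $C\subseteq\Lambda$, fix $y\in C$ and $\epsilon>0$ with $B_\epsilon(p)\subseteq\mathrm{int}[\Lambda]$ and $\epsilon<b$, and let $\delta>0$ be as in the shadowing of $f$ on $B_b(\Lambda)$ for this $\epsilon$. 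By chain transitivity of $f|_C$ there is a $\delta$-chain $(x_i)_{i=0}^k$ from $p$ to $y$ with $x_i\in C$ for all $i$; assuming it lies in $B_b(\Lambda)$, shadowing provides $z\in X$ with $\sup_{0\le i\le k}d(x_i,f^i(z))\le\epsilon$. Then $d(z,p)\le\epsilon$ gives $z\in B_\epsilon(p)\subseteq\mathrm{int}[\Lambda]\subseteq\Lambda$, so $f^k(z)\in\Lambda$ by forward invariance, and $d(y,\Lambda)\le d(y,f^k(z))\le\epsilon$; letting $\epsilon\to0$ yields $y\in\overline{\Lambda}=\Lambda$.

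The step I expect to be the main obstacle is the clause "assuming it lies in $B_b(\Lambda)$": chain transitivity of $f|_C$ confines the chain to $C$, but the hypotheses only provide shadowing on a neighborhood of $\Lambda$, and a priori $C$ could extend far outside $B_b(\Lambda)$. This is where the L-shadowing hypothesis and the fact that $\mathrm{int}[C]\cap\mathrm{int}[\Lambda]$ is a nonempty open set should be brought to bear — for instance to show first that $\Lambda$ is chain stable, which immediately forces $y\in\Lambda$ whenever $p\rightarrow y$ and hence $C\subseteq\Lambda$; or, more quantitatively, to control how far a $\delta$-chain issuing from $\mathrm{int}[\Lambda]$ can wander before being shadowed back into a neighborhood of $\Lambda$. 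With $C\subseteq\Lambda$ in hand, Theorem 1.1 applied to $C$ finishes the proof, the mixing assertion coming either from that theorem or from \cite[Corollary 14]{RW} together with the shadowing of $f$ on $X=C$.
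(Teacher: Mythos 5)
Your overall route coincides with the paper's: fix $p\in{\rm int}[\Lambda]\cap{\rm int}[CR(f)]$, use local connectedness to produce a connected open set around $p$ inside $CR(f)$ and show it lies in the chain component $C$ of $p$ (your small-step chain argument for this is correct), establish $C\subset\Lambda$, and then apply Theorem 1.1 to $C$, noting that shadowing and L-shadowing on $B_b(\Lambda)$ restrict to $B_b(C)$ once $C\subset\Lambda$. However, the step you yourself flag as the ``main obstacle,'' namely the inclusion $C\subset\Lambda$, is left unproved, and it is a genuine gap: your direct attempt fails exactly where you say it does, since chain transitivity of $f|_C$ only confines the connecting $\delta$-chains to $C$, which you cannot yet place inside $B_b(\Lambda)$, so the shadowing hypothesis cannot be invoked on them; and your proposed fixes (prove $\Lambda$ is chain stable, or ``control the wandering'' in some unspecified way) are not carried out.

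The paper closes this gap with Lemma 3.1 applied to $S=\Lambda$: since $f(\Lambda)\subset\Lambda$, $p\in{\rm int}[\Lambda]$ (so $p\notin\partial\Lambda$), and $f$ has shadowing on $B_b(\Lambda)$, the contrapositive of Lemma 3.1 gives that $p\rightarrow y$ implies $y\in\Lambda$; as $p\rightarrow y$ for every $y\in C$, this yields $C\subset\Lambda$. The mechanism inside Lemma 3.1 is precisely the quantitative control you were looking for, and it needs only shadowing, not L-shadowing: given a $\delta_j$-chain from $p$ to a point outside $\Lambda$, truncate it at the first index where it leaves a small $a$-neighborhood of $\Lambda$, with $a$ chosen by uniform continuity so that the truncated chain still lies in $B_b(\Lambda)$; passing to a subsequential limit of the truncation endpoints gives a single point $z$ with $d(z,\Lambda)\ge a$ such that for every $\delta>0$ there is a $\delta$-chain from $p$ to $z$ inside $B_b(\Lambda)$; shadowing then produces $v$ with $d(p,v)\le\epsilon$, hence $v\in\Lambda$ (as $B_\epsilon(p)\subset\Lambda$) and $f^l(v)\in\Lambda$ by invariance, forcing $d(z,\Lambda)\le\epsilon<a$, a contradiction. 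With $C\subset\Lambda$ in hand, the rest of your argument goes through as written. Two minor remarks: proving that all of $\Lambda$ is chain stable is more than is needed and not the natural intermediate statement (here $f|_\Lambda$ is not assumed chain transitive; only the pointwise statement at $p$ is required), and L-shadowing plays no role in this step — it is used only inside Theorem 1.1.
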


By Corollary 1.1 and Lemma 2.2 in Section 2, we obtain the following corollary which improves \cite[Theorem 1]{ABD}.

\begin{cor}
Let $M$ be a closed Riemannian manifold and let $f\colon M\to M$ be a $C^1$-diffeomorphism. For a hyperbolic set $\Lambda$ for $f$, if ${\rm int}[\Lambda]\cap{\rm int}[CR(f)]\ne\emptyset$ (in particular, $\Lambda\subset CR(f)$ and ${\rm int}[\Lambda]\ne\emptyset$), then $M=\Lambda$ and $f$ is a mixing Anosov diffeomorphism.
\end{cor}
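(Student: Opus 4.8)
The plan is to deduce the statement from Corollary 1.1 by verifying its three hypotheses with $X=M$ and the given hyperbolic set $\Lambda$. A closed Riemannian manifold is connected and, being locally Euclidean, locally connected, so the first hypothesis holds; the second hypothesis $\mathrm{int}[\Lambda]\cap\mathrm{int}[CR(f)]\ne\emptyset$ is precisely what is assumed. Only the third requires an argument: one must produce $b>0$ for which $f$ has both shadowing and L-shadowing on $B_b(\Lambda)$. This is the content of Lemma 2.2.

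So the substantive point is Lemma 2.2, namely that a $C^1$-diffeomorphism has shadowing and L-shadowing on a neighborhood of every hyperbolic set. The shadowing half is the classical shadowing lemma for hyperbolic sets: there are $b>0$ and, for each $\epsilon>0$, a $\delta>0$ such that every $\delta$-chain contained in $B_b(\Lambda)$ is $\epsilon$-shadowed. For the L-shadowing half one exploits the hyperbolic structure: extend the splitting $T_\Lambda M=E^s\oplus E^u$ continuously to a compact neighborhood, pass to an adapted metric in which $Df$ contracts stable vectors and $Df^{-1}$ contracts unstable vectors by a uniform factor $\lambda\in(0,1)$, and, given a $\delta$-limit-pseudo orbit $(x_i)_{i\in\mathbb{Z}}$ lying in $B_b(\Lambda)$, shadow it by the (by expansivity, unique) true orbit $(f^i(x))_{i\in\mathbb{Z}}$ furnished by the shadowing lemma. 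Expressing the deviation $v_i$ at time $i$ in a local chart and splitting it along $E^s\oplus E^u$, the standard fixed-point construction of the shadowing orbit yields an estimate of the form $|v_i|\le C\sum_{j\in\mathbb{Z}}\lambda^{|i-j|}\,d(f(x_j),x_{j+1})$; since $d(f(x_j),x_{j+1})\to0$ as $j\to\pm\infty$, this bound is uniformly small (giving the $\sup$ estimate) and $|v_i|\to0$ as $i\to\pm\infty$ (giving the limit estimate), which is exactly L-shadowing. Equivalently, one may invoke the known fact that hyperbolic sets possess the two-sided limit shadowing property on a neighborhood. I expect this hyperbolic telescoping estimate to be the only technically substantial ingredient, and it is essentially classical.

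Granting Lemma 2.2, all three hypotheses of Corollary 1.1 hold, so $M=\Lambda$ and $f$ is a mixing homeomorphism. Since now the entire manifold $M=\Lambda$ is a hyperbolic set for $f$, the diffeomorphism $f$ is Anosov by definition, and together with mixing this gives that $f$ is a mixing Anosov diffeomorphism. Finally, this indeed refines \cite[Theorem 1]{ABD}: that theorem's hypothesis $\Lambda\subset\Omega(f)$ with $\mathrm{int}[\Lambda]\ne\emptyset$ implies $\mathrm{int}[\Lambda]\subset\mathrm{int}[CR(f)]$ (as $\Omega(f)\subset CR(f)$), hence $\mathrm{int}[\Lambda]\cap\mathrm{int}[CR(f)]\ne\emptyset$, and the conclusion ``Anosov'' is strengthened to ``mixing Anosov''.
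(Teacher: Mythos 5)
Your proposal is correct and takes essentially the same route as the paper, which obtains this corollary precisely by applying Corollary 1.1 with $X=M$ (closed manifolds being connected and locally connected) and supplying the shadowing and L-shadowing hypothesis on $B_b(\Lambda)$ via Lemma 2.2. The only divergence is your direct hyperbolic telescoping-sum justification of the L-shadowing half of Lemma 2.2, whereas the paper deduces it from Lemma 2.1 (expansivity together with shadowing on a slightly larger neighborhood implies L-shadowing); since you may simply cite Lemma 2.2, this difference is immaterial.
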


Given a homeomorphism $f\colon X\to X$, we say that a closed subset $\Lambda$ of $X$ with $f(\Lambda)=\Lambda$ is an {\em attractor} for $f$ if there is an open subset $U$ of $X$ such that $f(\overline{U})\subset U$ and
\[
\Lambda=\bigcap_{i\ge0}f^i(U).
\]
Let $M$ be a closed differentiable manifold. By \cite[Theorem 3.2.4]{Pi1}, we know that for a $C^0$-generic homeomorphism $f\colon M\to M$, the boundary of each attractor for $f$ is Lyapunov stable (see also \cite[Theorem 2.1]{H}). Note that a $C^0$-generic homeomorphism $f\colon M\to M$ has shadowing on $M$ \cite{PP}.

The second main result of this paper is the following.

\begin{thm}
Let $f\colon X\to X$ be a homeomorphism and let $\Lambda$ be an attractor for $f$. If $f$ has shadowing on $B_b(\Lambda)$ for some $b>0$, then $\partial\Lambda$ is chain stable.
\end{thm}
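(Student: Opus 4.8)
The plan is to argue by contradiction after reducing to a single implication. Since boundaries are always closed and $f$, being a homeomorphism with $f(\Lambda)=\Lambda$, preserves $\mathrm{int}[\Lambda]$ and hence satisfies $f(\partial\Lambda)=\partial\Lambda$, the set $\partial\Lambda$ is a closed $f$-invariant set; by the equivalence recorded in the introduction it suffices to show that $x\rightarrow y$ with $x\in\partial\Lambda$ forces $y\in\partial\Lambda$. Here I would first use the standard fact that an attractor is chain stable (a $\delta$-chain starting in $\Lambda$ is trapped in an attracting neighborhood and, after sufficiently many steps, enters any prescribed neighborhood of $\Lambda$; short chains are handled by uniform continuity together with $f^n(\Lambda)=\Lambda$). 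Thus $x\in\partial\Lambda\subset\Lambda$ and $x\rightarrow y$ already give $y\in\Lambda$, so if $y\notin\partial\Lambda$ then $y\in\mathrm{int}[\Lambda]$. The cases $\partial\Lambda=\emptyset$, $\mathrm{int}[\Lambda]=\emptyset$, and $\Lambda=X$ are trivial, so I may assume $\emptyset\neq\mathrm{int}[\Lambda]$ and $\Lambda\neq X$.

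Next I would fix an attracting neighborhood $U$ with $f(\overline{U})\subset U$ and $\Lambda=\bigcap_{i\ge0}f^i(U)$, and, replacing $U$ by $f^N(U)$ for large $N$ (which does not change $\Lambda$), arrange $\overline{U}\subset B_b(\Lambda)$. Two observations drive the proof. First, $\Lambda\subset f(U)$ with $f(U)$ open, so $\gamma:=d\bigl(\Lambda,X\setminus f(U)\bigr)>0$. Second, each $w\in U\setminus\Lambda$ has a largest integer $m=m(w)\ge0$ with $w\in f^m(\overline{U})$ (finite since $w\notin\Lambda$), and then $f^{-m}(w)\in\overline{U}\setminus f(\overline{U})\subset X\setminus f(U)$, so $d\bigl(f^{-m}(w),\Lambda\bigr)\ge\gamma$, while $f^{-j}(w)\in\overline{U}\subset B_b(\Lambda)$ for all $0\le j\le m$. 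In other words, the backward orbit of $w$ up to its first entry into the attracting region supplies a point that is a \emph{definite} distance $\gamma$ from $\Lambda$, yet the whole finite segment stays inside $B_b(\Lambda)$.

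Now suppose, for contradiction, $x\in\partial\Lambda$, $x\rightarrow y$, $y\in\mathrm{int}[\Lambda]$; choose $\rho\in(0,\gamma)$ with $B_\rho(y)\subset\Lambda$ and set $\epsilon:=\rho$. Let $\delta>0$ be furnished by shadowing of $f$ on $B_b(\Lambda)$ for this $\epsilon$, shrunk so that in addition (by chain stability of $\Lambda$) every $(\delta/2)$-chain starting in $\Lambda$ lies in $B_b(\Lambda)$. Pick a $(\delta/2)$-chain $(x_0,\dots,x_k)$ with $x_0=x$, $x_k=y$, and pick $w\in U\setminus\Lambda$ close enough to $x$ — using $x\in\partial\Lambda$, $x\in U$, and uniform continuity of $f$ — that $d(f(w),x_1)\le\delta$. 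Concatenating the true orbit segment $f^{-m}(w),\dots,f^{-1}(w),w$ with $x_1,\dots,x_k$ gives a $\delta$-chain lying in $B_b(\Lambda)$ and ending at $y$; let $p\in X$ $\epsilon$-shadow it. Then $d\bigl(f^{m+k}(p),y\bigr)\le\epsilon<\rho$, so $f^{m+k}(p)\in B_\rho(y)\subset\Lambda$, and since $f^n(\Lambda)=\Lambda$ for every $n\in\mathbb{Z}$ this forces $p\in\Lambda$. But $d\bigl(p,f^{-m}(w)\bigr)\le\epsilon$, so $d\bigl(f^{-m}(w),\Lambda\bigr)\le\epsilon<\gamma$, contradicting $d\bigl(f^{-m}(w),\Lambda\bigr)\ge\gamma$. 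Hence $y\in\partial\Lambda$, and $\partial\Lambda$ is chain stable.

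The main obstacle is exactly the imprecision of shadowing: a naive $\delta$-chain from a point $w\notin\Lambda$ near $x$ straight to $y\in\mathrm{int}[\Lambda]$ is useless, because the tracing orbit is only located within $\epsilon$ of the chain and $w$ itself already lies within $\epsilon$ of $\Lambda$, so nothing prevents $p\in\Lambda$. Prepending the backward orbit segment of $w$ — up to the last time it lies in $f^{m}(\overline U)$ — is the device that creates a gap $\gamma>0$ at the initial point while keeping the chain inside $B_b(\Lambda)$; combined with the fact that any tracing orbit reaching deep into $\mathrm{int}[\Lambda]$ must live entirely in the fully invariant set $\Lambda$, this produces the contradiction. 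The remaining work is routine: establishing chain stability of $\Lambda$, handling the $(\delta/2)$-chain quantifier bookkeeping, checking that $\overline U\subset B_b(\Lambda)$ can be arranged without altering $\Lambda$, and disposing of the degenerate cases; note also that only ordinary shadowing, not L-shadowing, is needed.
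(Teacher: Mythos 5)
Your proof is correct, and it reaches the contradiction by the same underlying mechanism as the paper --- shadow a chain that lies in $B_b(\Lambda)$, starts a definite distance from $\Lambda$, and ends deep inside ${\rm int}[\Lambda]$; then full invariance $f^{-1}(\Lambda)=\Lambda$ forces the shadowing point into $\Lambda$, contradicting the gap at the start --- but you organize the construction differently. The paper factors the argument through two reusable lemmas: Lemma 5.1 produces some $z\in X\setminus\Lambda$ with $z\rightarrow x$ by pulling points near $x$ (but outside $\Lambda$) backwards until they leave a fixed neighborhood and extracting a limit, and then the general Lemma 3.1 (the $f^{-1}(S)\subset S$ case, with $S=\Lambda$) handles the confinement issue by a second compactness extraction: from $\delta_j$-chains $z\rightarrow y$ it takes the \emph{last} index at distance $>a$ from $\Lambda$, so only the tail of the chain needs to lie in $B_b(\Lambda)$. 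You instead exploit the attractor structure directly: the backward orbit of a point $w\in U\setminus\Lambda$ near $x$, stopped at its last visit to $f^m(\overline U)$, is a genuine orbit segment inside $\overline U\subset B_b(\Lambda)$ whose initial point is at distance at least $\gamma=d\bigl(\Lambda,X\setminus f(U)\bigr)>0$ from $\Lambda$; prepending it to a chain $x\rightarrow y$ (kept in $B_b(\Lambda)$ by chain stability of $\Lambda$, which you, like the paper, use as a standard fact) gives the chain to shadow with no limit-point arguments at all. What the paper's route buys is reusability --- Lemma 3.1 also drives Theorem 1.1, and Lemma 5.1 reappears in Corollary 5.3 --- and it does not need the whole chain confined in $B_b(\Lambda)$; what your route buys is a self-contained, quantitatively explicit proof (the uniform gap $\gamma$ and the neighborhood $\overline U\subset B_b(\Lambda)$ replace two subsequence extractions), at the cost of leaning on the specific attracting neighborhood and on chain stability of $\Lambda$ twice. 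Your bookkeeping (truncating chains to see that chain stability confines every point of the chain, arranging $\overline U\subset B_b(\Lambda)$ by replacing $U$ with $f^N(U)$, and the degenerate cases) is sound.
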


The proofs of Theorem 1.1 and Theorem 1.2 will be based on a same lemma (Lemma 3.1 in Section 3).

This paper consists of six sections and an appendix. In Section 2, we prove several basic lemmas. In Section 3, we present a simple lemma which will be used in the proofs of Theorem 1.1 and Theorem 1.2. In Section 4, we prove Theorem 1.1 and Corollary 1.1; and two additional corollaries are given. In Section 5, we prove Theorem 1.2 and present related corollaries. In Section 6, we make some concluding remarks. In Appendix A, we prove a theorem concerning the connectedness of attractor boundaries.

\section{Preliminaries}

The aim of this section is to prove several basic lemmas on expansiveness, L-shadowing, and chain components. 

Given a homeomorphism $f\colon X\to X$ and a subset $S$ of $X$, we say that $f$ is {\em expansive} on $S$ if there is $e>0$ such that
\[
\sup_{i\in\mathbb{Z}}d(f^i(x),f^i(y))\le e
\]
implies $x=y$ for all $x,y\in\bigcap_{i\in\mathbb{Z}}f^i(S)$. In this case, $e>0$ is called an expansive constant for $f$ on $S$.

\begin{lem}
Let $f\colon X\to X$ be a homeomorphism and let $S$ be a subset of $X$. For any $0<b<c$, if $f$ is expansive and has shadowing on $B_c(S)$, then $f$ has L-shadowing on $B_b(S)$.
\end{lem}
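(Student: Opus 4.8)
The plan is to combine the finite shadowing property on $B_c(S)$ with a compactness argument to produce a single point that uniformly shadows a given bi-infinite $\delta$-limit-pseudo orbit, and then to upgrade the uniform estimate to the asymptotic (limit) estimate by exploiting expansiveness on $B_c(S)$ together with the vanishing of the pseudo-orbit defects at $\pm\infty$. This is the standard ``expansive $+$ shadowing'' toolkit, adapted so that everything stays inside $B_c(S)$.

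First I would fix an expansive constant $e>0$ for $f$ on $B_c(S)$. Given $\epsilon>0$, set $\epsilon'=\min\{\epsilon,\,e,\,c-b\}>0$ and let $\delta>0$ be the constant provided by shadowing on $B_c(S)$ for this $\epsilon'$; I claim $\delta$ witnesses L-shadowing on $B_b(S)$. Let $(x_i)_{i\in\mathbb{Z}}$ be a $\delta$-limit-pseudo orbit with $\{x_i\colon i\in\mathbb{Z}\}\subset B_b(S)$. For each $n\ge1$ the finite chain $(x_i)_{i=-n}^{n}$ is a $\delta$-chain contained in $B_b(S)\subset B_c(S)$, so, after the obvious re-indexing to start at $0$ and a matching shift of the shadowing point, there is $y_n\in X$ with $d(x_i,f^i(y_n))\le\epsilon'$ for all $-n\le i\le n$. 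By compactness of $X$ some subsequence $y_{n_k}\to x$, and continuity of each $f^i$ then gives $\sup_{i\in\mathbb{Z}}d(x_i,f^i(x))\le\epsilon'\le\epsilon$. Since $\epsilon'\le c-b$, this also shows $f^i(x)\in B_c(S)$ for every $i$, i.e. $x\in\bigcap_{i\in\mathbb{Z}}f^i(B_c(S))$.

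It remains to verify $\lim_{i\to\pm\infty}d(x_i,f^i(x))=0$; by symmetry (the case $i\to-\infty$ being entirely analogous, e.g. via $f^{-1}$, which has shadowing and the same expansive constant on $B_c(S)$) it suffices to treat $i\to+\infty$. Suppose not: there are $\alpha>0$ and $i_k\to+\infty$ with $d(x_{i_k},f^{i_k}(x))\ge\alpha$. Passing to a subsequence and using a diagonal argument, arrange that $f^{i_k}(x)\to p$ and, for every $j\in\mathbb{Z}$, $x_{j+i_k}\to y^{(j)}$. Because $(x_i)$ is a $\delta$-limit-pseudo orbit, $d(f(x_{j+i_k}),x_{j+1+i_k})\to0$ as $k\to\infty$ for each fixed $j$ (this is where $i_k\to+\infty$ is used), hence $f(y^{(j)})=y^{(j+1)}$; writing $q=y^{(0)}$ we get $y^{(j)}=f^j(q)$ for all $j$. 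Passing to the limit in $d(f^{j+i_k}(x),x_{j+i_k})\le\epsilon'$ yields $\sup_{j\in\mathbb{Z}}d(f^j(p),f^j(q))\le\epsilon'\le e$, while both $p$ and $q$ lie in $\bigcap_{i\in\mathbb{Z}}f^i(B_c(S))$ since each of their iterates is a limit of points of the closed set $B_c(S)$. Expansiveness on $B_c(S)$ then forces $p=q$, contradicting $d(p,q)=\lim_k d(f^{i_k}(x),x_{i_k})\ge\alpha>0$. This contradiction completes the proof.

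The routine parts (re-indexing finite chains, extracting convergent subsequences) are harmless; the step I expect to require the most care is the last one — organizing the diagonal extraction so that the shifted pseudo-orbits converge to a genuine $f$-orbit and so that all limit points are certified to lie in the expansive domain $\bigcap_{i\in\mathbb{Z}}f^i(B_c(S))$. Both of these hinge precisely on the closedness of $B_c(S)$, on the strict inequality $b<c$ (which keeps the shadowing orbit of $x$ inside $B_c(S)$), and on the defining asymptotic property of $\delta$-limit-pseudo orbits.
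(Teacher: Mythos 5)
Your proposal is correct and follows essentially the same route as the paper: obtain a uniform $\epsilon$-shadowing point for the bi-infinite pseudo-orbit (you spell out the finite-segment-plus-compactness step that the paper leaves implicit), then rule out a positive $\limsup$ of $d(x_i,f^i(x))$ at $\pm\infty$ by passing to limit points along a subsequence, using the vanishing pseudo-orbit defects to see the limits have genuine orbits staying in $B_c(S)$ within distance $\le e$ of each other, and invoking expansiveness to force a contradiction. The only cosmetic differences are your diagonal extraction (the paper gets the shifted limits directly from continuity of $f^i$) and your appeal to symmetry via $f^{-1}$ for the $-\infty$ case, both of which are fine.
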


\begin{proof}
Let $e>0$ be an expansive constant for $f$ on $B_c(S)$. Let $0<\epsilon\le\min\{e,c-b\}$. Since $f$ has shadowing on $B_c(S)$, there is $\delta>0$ such that every $\delta$-chain of $f$ in $B_c(S)$ is $\epsilon$-shadowed by some point of $X$. For any $\delta$-limit-pseudo orbit $\xi=(x_i)_{i\in\mathbb{Z}}$ of $f$ in $B_b(S)$, we have
\[
\sup_{i\in\mathbb{Z}}d(x_i,f^i(x))\le\epsilon
\]
for some $x\in X$. Note that $\{f^i(x)\colon i\in\mathbb{Z}\}\subset B_c(S)$. If
\[
\limsup_{i\to-\infty}d(x_i,f^i(x))>a>0,
\]
then there are a sequence $0\ge i_1>i_2>\cdots$ and $y,z\in X$ such that
\begin{itemize}
\item $\lim_{j\to\infty}x_{i_j}=y$ and $\lim_{j\to\infty}f^{i_j}(x)=z$,
\item $d(y,z)\ge a$; therefore, $y\ne z$.
\end{itemize}
As $\xi$ is a $\delta$-limit pseudo orbit of $f$, we have
\[
\lim_{j\to\infty}d(f^i(x_{i_j}),x_{i+i_j})=0
\]
for all $i\in\mathbb{Z}$. Since
\[
f^i(y)=\lim_{j\to\infty}f^i(x_{i_j})=\lim_{j\to\infty}x_{i+i_j},
\]
$i\in\mathbb{Z}$, we have $\{f^i(y)\colon i\in\mathbb{Z}\}\subset B_b(S)$. Since
\[
f^i(z)=\lim_{j\to\infty}f^i(f^{i_j}(x))=\lim_{j\to\infty}f^{i+{i_j}}(x),
\]
$i\in\mathbb{Z}$, we have $\{f^i(z)\colon i\in\mathbb{Z}\}\subset B_c(S)$. Because
\[
d(f^i(x_{i_j}),f^i(f^{i_j}(x)))=d(f^i(x_{i_j}),x_{i+i_j})+d(x_{i+i_j},f^{i+i_j}(x))\le d(f^i(x_{i_j}),x_{i+i_j})+\epsilon,
\]
$i\in\mathbb{Z}$, letting $j\to\infty$, we obtain $d(f^i(y),f^i(z))\le\epsilon\le e$ for all $i\in\mathbb{Z}$. This implies $y=z$, a contradiction. It follows that
\[
\limsup_{i\to-\infty}d(x_i,f^i(x))=0.
\]
If
\[
\limsup_{i\to+\infty}d(x_i,f^i(x))>a>0,
\]
then there are a sequence $0\le i_1<i_2<\cdots$ and $y,z\in X$ such that
\begin{itemize}
\item $\lim_{j\to\infty}x_{i_j}=y$ and $\lim_{j\to\infty}f^{i_j}(x)=z$,
\item $d(y,z)\ge a$; therefore, $y\ne z$.
\end{itemize}
By the same argument as above, we obtain $y=z$, a contradiction. It follows that
\[
\limsup_{i\to+\infty}d(x_i,f^i(x))=0.
\]
As a consequence, $\xi$ is $\epsilon$-limit shadowed by $x$. Since $\xi$ is arbitrary, we conclude that $f$ has L-shadowing on $B_b(S)$, completing the proof.
\end{proof}

By Lemma 2.1 and the shadowing lemma for hyperbolic sets (see, e.g., \cite[Theorem 18.1.2]{KH}), we obtain the following lemma.

\begin{lem}
Let $M$ be a closed Riemannian manifold and let
\[
f\colon M\to M
\]
be a $C^1$-diffeomorphism. For any hyperbolic set $\Lambda$ for $f$, $f$ is expansive and has shadowing on $B_c(\Lambda)$ for some $c>0$, and therefore $f$ has shadowing and L-shadowing on $B_b(\Lambda)$ for all $0<b<c$.
\end{lem}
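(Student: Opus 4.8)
The plan is to obtain the expansivity and shadowing assertions on a suitable neighborhood of $\Lambda$ directly from classical hyperbolic theory, and then to read off the conclusion on the smaller neighborhoods $B_b(\Lambda)$ using Lemma 2.1 together with the trivial monotonicity of shadowing in the reference set.

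First I would use that hyperbolicity is an open condition: the hyperbolic splitting over $\Lambda$ extends to a continuous (not necessarily $Df$-invariant) splitting of $TM$ over a neighborhood of $\Lambda$ on which appropriate cone conditions hold, so that for all sufficiently small $c>0$ the maximal invariant set
\[
\Lambda_c=\bigcap_{i\in\mathbb{Z}}f^i\big(B_c(\Lambda)\big)
\]
is a compact $f$-invariant hyperbolic set. Two facts about $\Lambda_c$ are then standard. On the one hand, a hyperbolic set is expansive: if $x,y\in\Lambda_c$ satisfy $\sup_{i\in\mathbb{Z}}d(f^i(x),f^i(y))\le e$ for a suitably small $e>0$, then $y$ lies in the intersection of the local stable and local unstable manifolds of $x$, hence $y=x$; by the definition at the start of this section this says precisely that $f$ is expansive on $B_c(\Lambda)$ with expansive constant $e$. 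On the other hand, the shadowing lemma for hyperbolic sets (e.g.\ \cite[Theorem 18.1.2]{KH}) provides, after shrinking $c$ if necessary, for every $\epsilon>0$ a $\delta>0$ such that every $\delta$-chain of $f$ contained in $B_c(\Lambda)$ is $\epsilon$-shadowed by some point of $M$; that is, $f$ has shadowing on $B_c(\Lambda)$. Intersecting the two neighborhoods, we may take a single $c>0$ for which both properties hold.

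Finally, fix $0<b<c$. Shadowing on $B_b(\Lambda)$ is immediate, since every $\delta$-chain of $f$ contained in $B_b(\Lambda)$ is in particular a $\delta$-chain contained in $B_c(\Lambda)$, and the shadowing point furnished for $B_c(\Lambda)$ serves for $B_b(\Lambda)$ as well. For L-shadowing on $B_b(\Lambda)$ I would invoke Lemma 2.1 with $S=\Lambda$: since $0<b<c$ and $f$ is expansive and has shadowing on $B_c(\Lambda)$, Lemma 2.1 yields that $f$ has L-shadowing on $B_b(\Lambda)$. I do not anticipate a genuine obstacle; the only point demanding a little care is the compatibility of the various neighborhoods — arranging that the neighborhood on which the classical shadowing lemma applies and the one on which the maximal invariant set is hyperbolic (hence expansive) both contain and are contained in sets of the form $B_c(\Lambda)$ — together with the observation that the expansiveness required by Lemma 2.1 is exactly expansiveness of the maximal invariant set $\Lambda_c$, which is the notion defined above.
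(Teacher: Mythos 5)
Your proposal is correct and follows essentially the same route as the paper, which simply cites the shadowing lemma for hyperbolic sets together with Lemma 2.1: expansiveness and shadowing on $B_c(\Lambda)$ come from classical hyperbolic theory (the maximal invariant set of a small neighborhood being hyperbolic, and the paper's notion of expansiveness on $B_c(\Lambda)$ referring exactly to that maximal invariant set), and then Lemma 2.1 plus monotonicity of shadowing yield the conclusion on $B_b(\Lambda)$ for $0<b<c$. The extra care you take about matching the various neighborhoods is a fair elaboration of what the paper leaves implicit, not a different argument.
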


\begin{lem}
Let $f\colon X\to X$ be a continuous map and let $C\in\mathcal{C}(f)$. For any $b>0$, there is $a>0$ such that $B_a(C)\cap D\ne\emptyset$ implies $D\subset B_b(C)$ for all $D\in\mathcal{C}(f)$.
\end{lem}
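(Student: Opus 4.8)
The plan is to argue by contradiction, combining a compactness argument with the fact that the chain relation $\rightarrow$ is stable under passing to limits of its two endpoints. So the first thing I would establish is this stability property: if $p_n\to p$ and $q_n\to q$ in $X$ and $p_n\rightarrow q_n$ for every $n$, then $p\rightarrow q$. To see it, fix $\delta>0$; by uniform continuity of $f$ (recall $X$ is compact) pick $0<\delta'\le\delta/3$ with $d(u,v)\le\delta'$ implying $d(f(u),f(v))\le\delta/3$, and fix $n$ with $d(p_n,p)\le\delta'$ and $d(q_n,q)\le\delta/3$. Choosing a $(\delta/3)$-chain $(z_i)_{i=0}^{k}$ of $f$ with $z_0=p_n$ and $z_k=q_n$ and replacing its two endpoints, the sequence $(p,z_1,\dots,z_{k-1},q)$ (read as $(p,q)$ when $k=1$) is a $\delta$-chain of $f$ from $p$ to $q$, as a one-line check of the first and last steps shows. (In particular this yields that $CR(f)$ is closed and that $\leftrightarrow$ is a closed relation; alternatively one may just cite these standard facts.)

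For the main argument, suppose the conclusion fails for some $b>0$. Then for each $n\ge1$ there are $D_n\in\mathcal{C}(f)$ and points $x_n\in B_{1/n}(C)\cap D_n$ and $y_n\in D_n$ with $d(y_n,C)>b$. Since $C$ is a closed subset of the compact space $X$, after passing to a subsequence we may assume $x_n\to x$ and $y_n\to y$ for some $x,y\in X$. As $d(\cdot,C)$ is continuous, $d(x,C)=\lim_n d(x_n,C)=0$, hence $x\in C$, while $d(y,C)=\lim_n d(y_n,C)\ge b>0$, hence $y\notin C$. On the other hand $x_n$ and $y_n$ lie in the same chain component $D_n\subset CR(f)$, so $x_n\rightarrow y_n$ and $y_n\rightarrow x_n$; by the stability property above, $x\rightarrow y$ and $y\rightarrow x$. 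Concatenating chains then gives $y\rightarrow y$, so $y\in CR(f)$, and together with $x\rightarrow y$ and $y\rightarrow x$ this yields $x\leftrightarrow y$. Thus $x$ and $y$ lie in the same chain component, and since $x\in C$ we get $y\in C$, contradicting $d(y,C)>0$. This contradiction proves the lemma.

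I expect no genuine obstacle here; the only step requiring a little care is the endpoint–replacement in the first paragraph (keeping track of the $\delta/3$ estimates and the degenerate case $k=1$), the remainder being a routine compactness/chain-recurrence argument.
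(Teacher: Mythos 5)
Your argument is correct and is essentially the paper's proof: the same contradiction-plus-compactness setup, extracting limits $x\in C$ and $y\notin C$ of points $x_n,y_n$ in a common chain component and concluding $x\leftrightarrow y$ from the closedness of the chain relation. The only difference is that you prove the closedness of $\rightarrow$ by the endpoint-replacement estimate, whereas the paper simply invokes the stated fact that $\leftrightarrow$ is a closed relation in $CR(f)^2$.
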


\begin{proof}
Assume the contrary, then there are a sequence $0<a_1>a_2>\cdots$ such that $\lim_{j\to\infty}a_j=0$, and a sequence $D_j\in\mathcal{C}(f)$, $j\ge1$, such that $B_{a_j}(C)\cap D_j\ne\emptyset$ and $D_j\setminus B_b(C)\ne\emptyset$ for all $j\ge1$. For every $j\ge1$, we take $x_j\in B_{a_j}(C)\cap D_j$ and $y_j\in D_j\setminus B_b(C)$. We may assume that $\lim_{j\to\infty}x_j=x$ and $\lim_{j\to\infty}y_j=y$ for some $x,y\in X$. Then, we have $x\in C$ and $d(y,C)\ge b$; therefore, $y\notin C$. For any $j\ge1$, as $x_j,y_j\in D_j$, we have $x_j\leftrightarrow y_j$. Since $\leftrightarrow$ is a closed relation in $CR(f)^2$, it follows that $x\leftrightarrow y$. By $x\in C$ and $x\leftrightarrow y$, we obtain $y\in C$, a contradiction, thus the lemma has been proved.  
\end{proof}

\begin{lem}
Let $f\colon X\to X$ be a homeomorphism and let $C\in\mathcal{C}(f)$. If $f$ has L-shadowing on $B_b(C)$ for some $b>0$, then $C$ is clopen in $CR(f)$.
\end{lem}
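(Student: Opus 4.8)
Since a chain component is closed, it remains only to show that $C$ is open in $CR(f)$, and the plan is to argue by contradiction. Suppose there are a point $x\in C$ and points $x_j\in CR(f)\setminus C$ with $x_j\to x$, and let $D_j\in\mathcal{C}(f)$ be the chain component containing $x_j$, so that $D_j\ne C$. Applying Lemma 2.3 with the given $b$ furnishes $a>0$ such that any chain component meeting $B_a(C)$ is contained in $B_b(C)$; since $x_j\to x\in C$, we get $D_j\subset B_b(C)$ for all large $j$. Fix one such $j$ and write $D=D_j$, $p=x_j$, noting that $\eta:=d(p,x)$ can be made as small as we wish by enlarging $j$.

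The heart of the proof is to manufacture, via L-shadowing on $B_b(C)$, a point $w\in X$ whose backward orbit clusters on $C$ and whose forward orbit clusters on $D$. I would fix any $\epsilon>0$, let $\delta>0$ be the constant provided by L-shadowing on $B_b(C)$, and take $j$ large enough that $\eta\le\delta$. Then I would build a bi-infinite sequence $(y_i)_{i\in\mathbb{Z}}$ by setting $y_0=f^{-1}(x)$ and $y_1=p$; letting $(y_i)_{i\le 0}$ traverse a concatenation of chains in $C$, each from $f^{-1}(x)$ to itself, whose meshes shrink to $0$ the further back one goes, the one nearest to $i=0$ having mesh $\le\delta$ (such chains exist because $f|_C$ is chain transitive); and letting $(y_i)_{i\ge 1}$ traverse a similar concatenation of chains in $D$ from $p$ to itself with meshes $\le\delta$ tending to $0$ as $i\to+\infty$. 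The only jump not already built into a chain occurs at $i=0$, where $d(f(y_0),y_1)=d(x,p)=\eta\le\delta$; every $y_i$ lies in $C\cup D\subset B_b(C)$; so $(y_i)_{i\in\mathbb{Z}}$ is a $\delta$-limit-pseudo orbit of $f$ contained in $B_b(C)$. L-shadowing yields $w\in X$ with $d(f^i(w),y_i)\to 0$ as $i\to\pm\infty$, and since $y_i\in C$ for $i\le 0$, $y_i\in D$ for $i\ge 1$, and $C$, $D$ are closed, this gives $d(f^i(w),C)\to 0$ as $i\to-\infty$ and $d(f^i(w),D)\to 0$ as $i\to+\infty$.

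From here one extracts $C\rightarrow D$: pick an accumulation point $c_0\in C$ of the backward orbit of $w$ and an accumulation point $d_0\in D$ of its forward orbit, and observe that a long orbit segment of $w$ running from near $c_0$ to near $d_0$, prefixed and suffixed by arbitrarily fine chains inside $C$ and inside $D$ respectively, realizes $\delta'$-chains from $c_0$ to $d_0$ for every $\delta'>0$; chain transitivity of $f|_C$ and $f|_D$ then upgrades this to $c\rightarrow d$ for all $c\in C$ and $d\in D$. Running the mirror-image construction — with $y_0=f^{-1}(p)\in D$, $y_1=x$, chains in $D$ for $i\le 0$ and chains in $C$ for $i\ge 1$, the transition error being again $d(p,x)=\eta\le\delta$ — yields $D\rightarrow C$ in the same way. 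Consequently every point of $C$ and every point of $D$ are $\leftrightarrow$-related, so $C$ and $D$ coincide as chain components, contradicting $D=D_j\ne C$. Therefore $CR(f)\setminus C$ is closed, i.e. $C$ is open, hence clopen, in $CR(f)$.

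I expect the main friction points to be: (i) checking cleanly that the stacked, ever-finer chains really do assemble into a single $\delta$-limit-pseudo orbit (one uniform bound $\delta$ on all consecutive errors, plus genuine decay to $0$ at both ends) that stays inside $B_b(C)$; and (ii) the passage from ``$d(f^i(w),C)\to 0$ as $i\to-\infty$ and $d(f^i(w),D)\to 0$ as $i\to+\infty$'' to the relation $C\rightarrow D$, which, although standard, requires the careful concatenation of an orbit segment of $w$ with short chains in $C$ and $D$ described above.
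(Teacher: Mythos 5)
Your proof is correct and follows essentially the same route as the paper: Lemma 2.3 to keep everything inside $B_b(C)$, two crossing $\delta$-limit-pseudo orbits joined at the nearby points of $C$ and $D$, L-shadowing to produce points whose backward/forward orbits accumulate on $C$ and $D$, giving $C\rightarrow D$ and $D\rightarrow C$ and hence $C=D$, a contradiction. The only difference is cosmetic: the paper builds the limit-pseudo orbits from the genuine half-orbits $(f^i(x))_{i<0}$ and $(f^i(y))_{i\ge0}$, which have zero error except at the single junction, so your concatenation of ever-finer chains inside $C$ and $D$ is a correct but unnecessary elaboration.
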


\begin{proof}
Let $\epsilon>0$. Since $f$ has L-shadowing on $B_b(C)$, there is $\delta>0$ such that every $\delta$-limit-pseudo orbit of $f$ in $B_b(C)$ is $\epsilon$-limit shadowed by some point of $X$. By Lemma 2.3,  there is $a>0$ such that $B_a(C)\cap D\ne\emptyset$ implies $D\subset B_b(C)$ for all $D\in\mathcal{C}(f)$. Assume that $C$ is not clopen in $CR(f)$, then there are $x\in C$ and $y\in CR(f)\setminus C$ such that
\[
d(x,y)\le\min\{\delta,a\}.
\]
By taking $D\in\mathcal{C}(f)$ with $y\in D$, we obtain $C\ne D$ and $y\in B_a(C)\cap D\ne\emptyset$; therefore, $D\subset B_b(C)$. We define $\xi=(x_i)_{i\in\mathbb{Z}}$ by $x_i=f^i(y)$ for all $i\ge0$ and $x_i=f^i(x)$ for all $i<0$. Since $\xi$ is a $\delta$-limit-pseudo orbit of $f$ in $B_b(C)$, we obtain
\[
\lim_{i\to-\infty}d(f^i(x),f^i(z))=\lim_{i\to+\infty}d(f^i(y),f^i(z))=0
\]
for some $z\in X$. It follows that
\[
\lim_{i\to-\infty}d(f^i(z),C)=\lim_{i\to+\infty}d(f^i(z),D)=0;
\]
therefore, $s\rightarrow t$ for some $s\in C$ and $t\in D$. Similarly, we define $\xi'=(y_i)_{i\in\mathbb{Z}}$ by $y_i=f^i(x)$ for all $i\ge0$ and $y_i=f^i(y)$ for all $i<0$. Since $\xi'$ is a $\delta$-limit-pseudo orbit of $f$ in $B_b(C)$, we obtain
\[
\lim_{i\to-\infty}d(f^i(y),f^i(w))=\lim_{i\to+\infty}d(f^i(x),f^i(w))=0
\]
for some $w\in X$. It follows that
\[
\lim_{i\to-\infty}d(f^i(w),D)=\lim_{i\to+\infty}d(f^i(w),C)=0;
\]
therefore, $u\rightarrow v$ for some $u\in D$ and $v\in C$. By $s\rightarrow t$ and $u\rightarrow v$, we obtain $C=D$, a contradiction. This completes the proof of the lemma.
\end{proof}

\section{A lemma}

In this section, we prove a lemma. Our proofs of Theorem 1.1 and Theorem 1.2 will be based on the following simple observation.

\begin{lem}
Let $f\colon X\to X$ be a continuous map. For $x,y\in X$ and a closed subset $S$ of $X$, if
\begin{itemize}
\item $f(S)\subset S$ (resp.\:$f^{-1}(S)\subset S$),
\item $x\in S$, $y\in X\setminus S$, and $x\rightarrow y$ (resp.\:$y\rightarrow x$),
\item $f$ has shadowing on $B_b(S)$ for some $b>0$,
\end{itemize}
then $x\in\partial S$.
\end{lem}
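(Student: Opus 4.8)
The plan is to treat the case $f(S)\subset S$, $x\to y$; the case $f^{-1}(S)\subset S$, $y\to x$ is entirely analogous, obtained by reversing the direction of time. Since $x\in S$ and $S$ is closed, $x\in\overline S$ is automatic, so the task is to exclude $x\in\mathrm{int}[S]$. I would argue by contradiction: suppose $x\in\mathrm{int}[S]$, fix $r>0$ with $B_r(x)\subset S$, and put $\rho:=d(y,S)$, which is positive since $y\notin S$ and $S$ is closed.

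The heart of the matter is to cut a $\delta$-chain running from $x$ to $y$ at the right index, so that the retained initial segment stays inside $B_b(S)$ yet already reaches a point a definite distance from $S$. First, using uniform continuity of $f$ on the compact space $X$, I would fix $\eta\in(0,b/4]$ with $d(p,q)\le\eta\Rightarrow d(f(p),f(q))\le b/4$; since $f(S)\subset S$, this gives the following: whenever a point $w$ of a $\delta$-chain lies in $B_\eta(S)$ and $\delta\le b/4$, its successor lies in $B_{b/2}(S)$ (pick $s\in S$ with $d(w,s)\le\eta$; then $f(s)\in S$, so the successor is within $\delta+b/4\le b/2$ of $S$). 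Next I would set $\epsilon:=\tfrac12\min\{r,\eta,\rho\}$, use shadowing on $B_b(S)$ to get $\delta\in(0,b/4]$ such that every $\delta$-chain contained in $B_b(S)$ is $\epsilon$-shadowed by some point of $X$, and then (using $x\to y$) pick a $\delta$-chain $(x_i)_{i=0}^{k}$ with $x_0=x$, $x_k=y$. Setting
\[
j:=\min\bigl(\{\,i:x_i\notin B_\eta(S)\,\}\cup\{k\}\bigr),
\]
one has $j\ge 1$ (because $x_0=x\in S\subset B_\eta(S)$) and $x_0,\dots,x_{j-1}\in B_\eta(S)$. The point of this choice is that $\{x_0,\dots,x_j\}\subset B_b(S)$ (the points $x_0,\dots,x_{j-1}$ lie in $B_\eta(S)\subset B_b(S)$, and $x_j$ lies in $B_{b/2}(S)\subset B_b(S)$ by the consequence above applied to $x_{j-1}$), while $d(x_j,S)>\epsilon$ (if $j<k$ then $x_j\notin B_\eta(S)$, so $d(x_j,S)>\eta>\epsilon$; if $j=k$ then $x_j=y$, so $d(x_j,S)=\rho>\epsilon$).

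To finish, I would apply shadowing on $B_b(S)$ to the $\delta$-chain $(x_i)_{i=0}^{j}$, obtaining $z\in X$ with $d(x_i,f^i(z))\le\epsilon$ for $0\le i\le j$. Then $d(x,z)=d(x_0,z)\le\epsilon\le r$ gives $z\in B_r(x)\subset S$, hence $f^{j}(z)\in S$ since $f(S)\subset S$, and therefore $d(x_j,S)\le d(x_j,f^{j}(z))\le\epsilon$, contradicting $d(x_j,S)>\epsilon$. This forces $x\notin\mathrm{int}[S]$, i.e.\ $x\in\partial S$. The step I expect to be the real obstacle is precisely the truncation in the middle paragraph: an arbitrary $\delta$-chain from $x$ to $y$ may leave every given neighborhood of $S$, so it cannot be shadowed on $B_b(S)$ directly, and truncating at the first exit from $S$ is useless because that exit point is only $\delta$-close to $S$; the fix is to truncate at the first exit from the slightly larger set $B_\eta(S)$, with $\eta$ coming from uniform continuity together with $f(S)\subset S$, which simultaneously keeps the retained segment inside $B_b(S)$ and keeps its last point at distance $>\epsilon$ from $S$. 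Everything else is routine bookkeeping with the constants $r,\rho,\eta,\epsilon,\delta$.
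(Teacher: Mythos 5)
Your argument for the case $f(S)\subset S$, $x\rightarrow y$ is correct, and it is essentially the paper's proof: the paper also truncates a $\delta$-chain from $x$ to $y$ at the first exit from a small closed neighborhood $B_a(S)$, uses uniform continuity together with $f(S)\subset S$ to keep the exit point inside $B_b(S)$, shadows the truncated chain, and derives a contradiction from forward invariance of $S$; your only deviation is that by fixing $\eta$ and $\epsilon$ before $\delta$ you avoid the paper's subsequence/limit-point step, which is a cosmetic simplification.

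The gap is in your one-line dismissal of the second case. Since $f$ is only a continuous map, there is no inverse with which to ``reverse the direction of time,'' and the mechanical mirror of your key step fails: uniform continuity of $f$ gives ``predecessor in $B_\eta(S)$ $\Rightarrow$ successor in $B_{b/2}(S)$,'' but in the case $f^{-1}(S)\subset S$, $y\rightarrow x$ you must truncate the chain from $y$ to $x$ at the \emph{last} index $l$ with $d(x_l,S)>a$, and then you need the reverse implication: knowing only that $d(x_{l+1},S)\le a$ and $d(f(x_l),x_{l+1})\le\delta$, you must conclude $x_l\in B_b(S)$ so that the retained terminal segment lies in $B_b(S)$. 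This does not follow from continuity of $f$; it requires a separate compactness argument using $f^{-1}(S)\subset S$: there is $a>0$ such that $d(f(q),S)\le 2a$ implies $d(q,S)\le b$, for otherwise a convergent subsequence would produce $q$ with $d(q,S)\ge b$ and $f(q)\in S$, contradicting $f^{-1}(S)\subset S$. (This is exactly how the paper chooses its constant in the second case.) With that constant the rest of your scheme does carry over --- shadow the segment from $x_l$ to $x$, note $d(x,f^{k-l}(v))\le\epsilon$ forces $f^{k-l}(v)\in S$ and hence $v\in S$ by backward invariance, contradicting $d(x_l,S)>a$ --- but as written your proposal supplies no justification for the one step that is genuinely different from case 1.
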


\begin{proof}
We first consider the case where $f(S)\subset S$ and $x\rightarrow y$. Assume the contrary, i.e., $x\notin\partial S$ and so $x\in{\rm int}[S]$, to obtain a contradiction. We take $0<a<\min\{b,d(y,S)\}$ such that
\[
\max\{d(p,S),d(f(p),q)\}\le a
\]
implies $d(q,S)\le b$ for all $p,q\in X$. Let $a\ge\delta_1>\delta_2>\cdots$ and $\lim_{j\to\infty}\delta_j=0$. Since $x\rightarrow y$, for every $j\ge1$, there is a $\delta_j$-chain $(x_i^{(j)})_{i=0}^{k_j}$ of $f$ such that $x_0^{(j)}=x$ and $x_{k_j}^{(j)}=y$. Given $j\ge1$, let
\[
l_j=\min\{i\ge1\colon d(x_i^{(j)},S)>a\}
\]
and note that $d(x_{l_j}^{(j)},S)>a$. Since $d(x_{l_j-1}^{(j)},S)\le a$ and
\[
d(f(x_{l_j-1}^{(j)}),x_{l_j}^{(j)})\le\delta_j\le a,
\]
we have $d(x_{l_j}^{(j)},S)\le b$ and so $\{x_i^{(j)}\colon0\le i\le l_j\}\subset B_b(S)$. Taking a subsequence if necessary, we may assume that $\lim_{j\to\infty}x_{l_j}^{(j)}=z$ for some $z\in X$. Then, $d(z,S)\ge a$ and for every $\delta>0$, there is a $\delta$-chain $(y_i)_{i=0}^l$ of $f$ such that $\{y_i\colon0\le i\le l\}\subset B_b(S)$, $y_0=x$, and $y_l=z$. Fix $0<\epsilon<a$ with $B_\epsilon(x)\subset S$. Since $f$ has shadowing on $B_b(S)$, there is $\delta>0$ such that every $\delta$-chain of $f$ in $B_b(S)$ is $\epsilon$-shadowed by some point of $X$. We take a $\delta$-chain $(y_i)_{i=0}^l$ of $f$ such that $\{y_i\colon0\le i\le l\}\subset B_b(S)$, $y_0=x$, and $y_l=z$. Then, 
\[
\max\{d(x,v),d(z,f^l(v))\}\le\epsilon
\]
for some $v\in X$. It follows that $v\in S$ and so $f^l(v)\in S$; thus, we obtain $d(z,S)\le\epsilon<a$, a contradiction.

We next consider the case where $f^{-1}(S)\subset S$ and $y\rightarrow x$. Assume the contrary, i.e., $x\notin\partial S$ and so $x\in{\rm int}[S]$, to obtain a contradiction. We take $0<a<\min\{b,d(y,S)\}$ such that
\[
\max\{d(p,S),d(f(q),p)\}\le a
\]
implies $d(q,S)\le b$ for all $p,q\in X$. Let $a\ge\delta_1>\delta_2>\cdots$ and $\lim_{j\to\infty}\delta_j=0$. Since $y\rightarrow x$, for every $j\ge1$, there is a $\delta_j$-chain $(x_i^{(j)})_{i=0}^{k_j}$ of $f$ such that $x_0^{(j)}=y$ and $x_{k_j}^{(j)}=x$. Given $j\ge1$, let
\[
l_j=\max\{i\le k_j-1\colon d(x_i^{(j)},S)>a\}
\]
and note that $d(x_{l_j}^{(j)},S)>a$. Since $d(x_{l_j+1}^{(j)},S)\le a$ and
\[
d(f(x_{l_j}^{(j)}),x_{l_j+1}^{(j)})\le\delta_j\le a,
\]
we have $d(x_{l_j}^{(j)},S)\le b$ and so $\{x_i^{(j)}\colon l_j\le i\le k_j\}\subset B_b(S)$. Taking a subsequence if necessary, we may assume that $\lim_{j\to\infty}x_{l_j}^{(j)}=z$ for some $z\in X$. Then, $d(z,S)\ge a$ and for every $\delta>0$, there is a $\delta$-chain $(y_i)_{i=0}^l$ of $f$ such that $\{y_i\colon0\le i\le l\}\subset B_b(S)$, $y_0=z$, and $y_l=x$. Fix $0<\epsilon<a$ with $B_\epsilon(x)\subset S$. Since $f$ has shadowing on $B_b(S)$, there is $\delta>0$ such that every $\delta$-chain of $f$ in $B_b(S)$ is $\epsilon$-shadowed by some point of $X$. We take a $\delta$-chain $(y_i)_{i=0}^l$ of $f$ such that $\{y_i\colon0\le i\le l\}\subset B_b(S)$, $y_0=z$, and $y_l=x$. Then, 
\[
\max\{d(z,v),d(x,f^l(v))\}\le\epsilon
\]
for some $v\in X$. It follows that $f^l(v)\in S$ and so $v\in S$; thus, we obtain $d(z,S)\le\epsilon<a$, a contradiction. This completes the proof of the lemma.
\end{proof}

\section{Proof of Theorem 1.1 and corollaries}

In this section, we prove Theorem 1.1 and Corollary 1.1. We also present two corollaries. Let us first prove the following lemma via Lemma 3.1.

\begin{lem}
Let $f\colon X\to X$ be a continuous map and let $\Lambda$ be a closed $f$-invariant subset of $X$. For a closed subset $S$ of $X$, if
\begin{itemize}
\item $f|_\Lambda\colon\Lambda\to\Lambda$ is chain transitive,
\item $\Lambda\subset S$ and $f(S)\subset S$ (resp.\:$f^{-1}(S)\subset S$),
\item there are $x\in\Lambda$ and $y\in X\setminus S$ such that $x\rightarrow y$ (resp.\:$y\rightarrow x$),
\item $f$ has shadowing on $B_b(S)$ for some $b>0$,
\end{itemize}
then $\Lambda\subset\partial S$.
\end{lem}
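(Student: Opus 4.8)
The plan is to deduce the conclusion from Lemma 3.1 by using the chain transitivity of $f|_\Lambda$ to transport the hypothesis from the single point $x$ to every point of $\Lambda$. I describe the case $f(S)\subset S$ and $x\rightarrow y$; the parenthetical case $f^{-1}(S)\subset S$ and $y\rightarrow x$ is entirely analogous, using the parenthetical form of Lemma 3.1.

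First I would fix an arbitrary point $x'\in\Lambda$. Since $f|_\Lambda\colon\Lambda\to\Lambda$ is chain transitive and $x,x'\in\Lambda$, for every $\delta>0$ there is a $\delta$-chain of $f$ from $x'$ to $x$ all of whose points lie in $\Lambda$ (hence in $X$). Concatenating such a chain with a $\delta$-chain of $f$ from $x$ to $y$, which exists because $x\rightarrow y$, and checking that the defining inequality of a $\delta$-chain still holds at the junction index corresponding to the common point $x$, I obtain a $\delta$-chain of $f$ from $x'$ to $y$. As $\delta>0$ was arbitrary, this yields $x'\rightarrow y$.

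Now I would apply Lemma 3.1 to the pair $(x',y)$ and the closed set $S$: indeed $f(S)\subset S$, $x'\in\Lambda\subset S$, $y\in X\setminus S$, $x'\rightarrow y$, and $f$ has shadowing on $B_b(S)$. Hence $x'\in\partial S$. Since $x'\in\Lambda$ was arbitrary, we conclude $\Lambda\subset\partial S$. In the parenthetical case the only change is that one first uses the chain $x\rightarrow x'$ inside $\Lambda$ and concatenates it after the given chain $y\rightarrow x$, obtaining $y\rightarrow x'$, and then invokes the parenthetical form of Lemma 3.1 with $f^{-1}(S)\subset S$.

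I do not expect any real obstacle here: the argument is essentially bookkeeping on top of Lemma 3.1. The only steps warranting a line of care are the concatenation of $\delta$-chains (verifying the $\delta$-chain inequality at the joining index) and, in the parenthetical case, being careful to compose the chains in the correct order so that the hypothesis $y\rightarrow x'$ (rather than $x'\rightarrow y$) is the one produced.
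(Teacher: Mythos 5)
Your proposal is correct and follows essentially the same route as the paper: chain transitivity of $f|_\Lambda$ gives $x'\rightarrow x$ (resp.\ $x\rightarrow x'$), transitivity of $\rightarrow$ via concatenation yields $x'\rightarrow y$ (resp.\ $y\rightarrow x'$), and Lemma 3.1 then places $x'$ in $\partial S$. The paper's proof is the same argument, stated slightly more tersely by invoking transitivity of the relation $\rightarrow$ without writing out the concatenation of chains.
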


\begin{proof}
Consider the case where $f(S)\subset S$ and $x\rightarrow y$. Since $f|_\Lambda\colon\Lambda\to\Lambda$ is chain transitive, for any $z\in\Lambda$, we have $z\rightarrow x$. By $z\rightarrow x$ and $x\rightarrow y$, we obtain $z\rightarrow y$. From Lemma 3.1, it follows that $z\in\partial S$; therefore, we obtain $\Lambda\subset\partial S$.

Consider the case where $f^{-1}(S)\subset S$ and $y\rightarrow x$. Since $f|_\Lambda\colon\Lambda\to\Lambda$ is chain transitive, for any $z\in\Lambda$, we have $x\rightarrow z$. By $y\rightarrow x$ and $x\rightarrow z$, we obtain $y\rightarrow z$. From Lemma 3.1, it follows that $z\in\partial S$; therefore, we obtain $\Lambda\subset\partial S$, proving the lemma.
\end{proof}

By Lemma 4.1, we obtain the following lemma.

\begin{lem}
Let $f\colon X\to X$ be a continuous map and let $\Lambda$ be a closed $f$-invariant subset of $X$. If
\begin{itemize}
\item ${\rm int}[\Lambda]\ne\emptyset$,
\item $f|_\Lambda\colon\Lambda\to\Lambda$ is chain transitive,
\item $f$ has shadowing on $B_b(\Lambda)$ for some $b>0$,
\end{itemize}
then $\Lambda\in\mathcal{C}_{\rm ter}(f)$.
\end{lem}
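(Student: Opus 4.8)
The plan is to obtain the conclusion directly from Lemma 4.1, applied with the choice $S=\Lambda$. Since $\Lambda$ is closed and $f$-invariant we have $\Lambda\subset\Lambda$ and $f(\Lambda)\subset\Lambda$, and by hypothesis $f|_\Lambda$ is chain transitive and $f$ has shadowing on $B_b(\Lambda)$; so the only hypothesis of Lemma 4.1 (in the case $f(S)\subset S$) that is not automatic is the existence of $x\in\Lambda$ and $y\in X\setminus\Lambda$ with $x\rightarrow y$.

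First I would show, by contradiction, that no such pair $(x,y)$ exists. If it did, Lemma 4.1 would give $\Lambda\subset\partial\Lambda$; but $\Lambda$ is closed, so $\partial\Lambda=\Lambda\setminus{\rm int}[\Lambda]$, and $\Lambda\subset\Lambda\setminus{\rm int}[\Lambda]$ forces ${\rm int}[\Lambda]=\emptyset$, contradicting the first hypothesis. Hence $x\rightarrow y$ with $x\in\Lambda$ implies $y\in\Lambda$. Since $\Lambda$ is closed and $f$-invariant, by the characterization of chain stability recalled in Section 1 this says precisely that $\Lambda$ is chain stable.

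Next I would verify that $\Lambda$ is a chain component. Because ${\rm int}[\Lambda]\ne\emptyset$ we have $\Lambda\ne\emptyset$, and chain transitivity of $f|_\Lambda$ gives $x\rightarrow x$ for every $x\in\Lambda$, so $\Lambda\subset CR(f)$ and $x\leftrightarrow y$ for all $x,y\in\Lambda$; hence $\Lambda$ lies inside a single chain component $C\in\mathcal{C}(f)$. Conversely, pick any $x\in\Lambda$; for $z\in C$ we have $x\leftrightarrow z$, in particular $x\rightarrow z$, so chain stability of $\Lambda$ yields $z\in\Lambda$, i.e. $C\subset\Lambda$. Thus $\Lambda=C\in\mathcal{C}(f)$, and being a chain-stable chain component it is terminal, so $\Lambda\in\mathcal{C}_{\rm ter}(f)$.

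This argument is short; the only substantive input is Lemma 4.1 (itself resting on Lemma 3.1). The point requiring care is not to stop at ``$\Lambda$ is chain stable,'' but to also confirm that $\Lambda$ equals, rather than merely sits inside, a chain component — this is where chain transitivity of $f|_\Lambda$ gets used a second time, and it is the step I would single out as the (mild) main obstacle.
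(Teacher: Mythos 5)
Your proposal is correct and follows essentially the same route as the paper's proof: apply Lemma~4.1 with $S=\Lambda$ to rule out $x\in\Lambda$, $y\in X\setminus\Lambda$ with $x\rightarrow y$ (since $\Lambda\subset\partial\Lambda$ would contradict ${\rm int}[\Lambda]\ne\emptyset$), concluding that $\Lambda$ is chain stable, and then use chain transitivity of $f|_\Lambda$ to identify $\Lambda$ with the chain component containing it. The only difference is cosmetic: the paper sets up the identification $\Lambda=C$ first and then argues chain stability by contradiction, whereas you do these two steps in the opposite order.
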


\begin{proof}
Since $f|_{\Lambda}$ is chain transitive, we have $\Lambda\subset C$ for some $C\in\mathcal{C}(f)$. Fix $r\in\Lambda$ and note that $r\rightarrow s$ for all $s\in C$. If $\Lambda$ is chain stable, then it follows that $C\subset\Lambda$ and so $\Lambda=C\in\mathcal{C}_{\rm ter}(f)$. Thus, it is sufficient to show that $\Lambda$ is chain stable. Assume the contrary, i.e., $\Lambda$ is not chain stable. Then, there are $x\in\Lambda$ and $y\in X\setminus\Lambda$ such that $x\rightarrow y$. Letting $S=\Lambda$, we see that the assumptions of Lemma 4.1 is satisfied. It follows that $\Lambda\subset\partial\Lambda$ (and so $\Lambda=\partial\Lambda$), which contradicts the assumption that ${\rm int}[\Lambda]\ne\emptyset$; therefore, the lemma has been proved.
\end{proof}

Similarly, by Lemma 4.1, we obtain the following lemma.

\begin{lem}
Let $f\colon X\to X$ be a homeomorphism and let $\Lambda$ be a closed $f$-invariant subset of $X$. If
\begin{itemize}
\item ${\rm int}[\Lambda]\ne\emptyset$,
\item $f|_\Lambda\colon\Lambda\to\Lambda$ is chain transitive,
\item $f$ has shadowing on $B_b(\Lambda)$ for some $b>0$,
\end{itemize}
then $\Lambda\in\mathcal{C}_{\rm ini}(f)\cap\mathcal{C}_{\rm ter}(f)$.
\end{lem}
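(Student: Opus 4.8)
The plan is to use Lemma 4.2 for the ``terminal'' half of the conclusion and the reverse-time branch of Lemma 4.1 for the ``initial'' half. First, Lemma 4.2 applies to the present hypotheses verbatim (a homeomorphism being in particular a continuous map), so $\Lambda\in\mathcal{C}_{\rm ter}(f)$; since $\mathcal{C}_{\rm ter}(f)\subset\mathcal{C}(f)$, this in fact shows that $\Lambda$ \emph{is} a chain component of $f$, and hence, by the basic properties of chain components recorded in Section 1, $f(\Lambda)=\Lambda$, so that $f^{-1}(\Lambda)=\Lambda$ and $\Lambda$ is $f^{-1}$-invariant. (Alternatively one derives $f(\Lambda)=\Lambda$ directly from chain transitivity of $f|_\Lambda$: a point of $\Lambda\setminus f(\Lambda)$ would lie at positive distance from the compact set $f(\Lambda)$ and so could not be the endpoint of any $\delta$-chain in $\Lambda$ for small $\delta$.) It therefore remains to prove that $\Lambda\in\mathcal{C}_{\rm ini}(f)$, i.e., that $\Lambda\in\mathcal{C}_{\rm ter}(f^{-1})$, i.e., that $\Lambda$ is chain stable for $f^{-1}$.

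I would prove this by contradiction, mirroring the proof of Lemma 4.2 but with the ``resp.'' (reverse-time) branch of Lemma 4.1 in place of the forward one. Suppose $\Lambda$ is not chain stable for $f^{-1}$. By the characterization of chain stability stated in Section 1 (applied to $f^{-1}$ and to the $f^{-1}$-invariant set $\Lambda$), there are $x\in\Lambda$ and $y\in X\setminus\Lambda$ with $x\rightarrow y$ relative to $f^{-1}$; reversing chains, this is precisely $y\rightarrow x$ relative to $f$ (the standard identification underlying the very definition of $\mathcal{C}_{\rm ini}(f)$). Now apply Lemma 4.1 with $S=\Lambda$ in the case $f^{-1}(S)\subset S$, $y\rightarrow x$: its hypotheses all hold — $f|_\Lambda$ is chain transitive, $\Lambda\subset\Lambda$ and $f^{-1}(\Lambda)\subset\Lambda$, $x\in\Lambda$ and $y\in X\setminus\Lambda$ with $y\rightarrow x$, and $f$ has shadowing on $B_b(\Lambda)$ — so the lemma gives $\Lambda\subset\partial\Lambda$, i.e., ${\rm int}[\Lambda]=\emptyset$, contradicting the first hypothesis. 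Hence $\Lambda$ is chain stable for $f^{-1}$, and together with the first paragraph this yields $\Lambda\in\mathcal{C}_{\rm ini}(f)\cap\mathcal{C}_{\rm ter}(f)$.

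I do not anticipate a real obstacle: Lemma 4.1 was formulated with its forward/backward (``resp.'') alternatives precisely so as to supply both halves of a statement of this kind, and Lemma 4.2 is — up to time reversal — already exactly the forward half. The only step requiring a little care is the one absorbed in the first paragraph, namely promoting ``$\Lambda$ is $f$-invariant'' to ``$f(\Lambda)=\Lambda$'' so that the backward-invariance hypothesis $f^{-1}(\Lambda)\subset\Lambda$ of Lemma 4.1's reverse-time branch is legitimately in force; everything else is the routine time-reverse of the proof of Lemma 4.2, using only the standard fact that the chain relation $\rightarrow$ for $f^{-1}$ is the reverse of that for $f$.
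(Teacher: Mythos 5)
Your proof is correct and follows essentially the same route as the paper: Lemma 4.2 gives $\Lambda\in\mathcal{C}_{\rm ter}(f)$, and then, after noting $f(\Lambda)=\Lambda$ so that $f^{-1}(\Lambda)=\Lambda$, the reverse-time branch of Lemma 4.1 with $S=\Lambda$ turns a failure of initiality into $\Lambda\subset\partial\Lambda$, contradicting ${\rm int}[\Lambda]\ne\emptyset$. Your extra care in justifying $f(\Lambda)=\Lambda$ (via $\Lambda$ being a chain component, or directly from chain transitivity) only makes explicit a step the paper states without comment.
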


\begin{proof}
By Lemma 4.2, we obtain $\Lambda\in\mathcal{C}_{\rm ter}(f)$. Note that $f(\Lambda)=\Lambda$ and so $f^{-1}(\Lambda)=\Lambda$. If $\Lambda\notin\mathcal{C}_{\rm ini}(f)$, then there are $x\in\Lambda$ and $y\in X\setminus\Lambda$ such that $y\rightarrow x$. Letting $S=\Lambda$, we see that the assumptions of Lemma 4.1 is satisfied. It follows that $\Lambda\subset\partial\Lambda$ (and so $\Lambda=\partial\Lambda$), which contradicts the assumption that ${\rm int}[\Lambda]\ne\emptyset$; therefore, we obtain $\Lambda\in\mathcal{C}_{\rm ini}(f)$, completing the proof of the lemma.
\end{proof}

\begin{rem}
\normalfont
Let $M$ be a closed differentiable manifold. In \cite[Lemma 3.3]{ABD}, it is shown that for a $C^1$-generic diffeomorphism $f\colon M\to M$, any homoclinic class $H$ with nonempty interior is Lyapunov stable for $f$ and $f^{-1}$.
\end{rem}

Given a continuous map $f\colon X\to X$ and $x\in X$, let $\omega(x,f)$ denote the $\omega$-limit set of $x$ for $f$, i.e., the set of $y\in X$ such that $\lim_{j\to\infty}f^{i_j}(x)=y$ for some $0\le i_1<i_2<\cdots$. Note that $\omega(x,f)$ is a closed $f$-invariant subset of $X$ and satisfies
\[
\lim_{i\to\infty}d(f^i(x),\omega(x,f))=0.
\]
Because $y\rightarrow z$ for all $y,z\in\omega(x,f)$, there is a unique $C(x,f)\in\mathcal{C}(f)$ such that $\omega(x,f)\subset C(x,f)$.

\begin{lem}
Let $f\colon X\to X$ be a homeomorphism and let $C\in\mathcal{C}(f)$. If $C\in\mathcal{C}_{\rm ini}(f)\cap\mathcal{C}_{\rm ter}(f)$ and $C$ is clopen in $CR(f)$, then $C$ is clopen in $X$.
\end{lem}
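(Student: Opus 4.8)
The plan is to deduce that $C$ is open in $X$; since $C$ is automatically closed, being a chain component, this yields at once that $C$ is clopen. Concretely I would produce $\delta>0$ with $B_\delta(C)\subset C$. Two soft facts drive the argument: terminality traps the forward orbit of any point that starts near $C$, and initiality then forces any such point back into $C$; the hypothesis that $C$ is clopen in $CR(f)$ is exactly what bridges the two. No shadowing assumption enters.

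First I would fix $\epsilon>0$ with $B_\epsilon(C)\cap CR(f)\subset C$. This is possible because $C$ is clopen in $CR(f)$ and $CR(f)$ is compact: if $CR(f)\ne C$ take $2\epsilon<d(C,CR(f)\setminus C)$ (these are two disjoint nonempty compacta), and if $CR(f)=C$ any $\epsilon$ works. Since $C$ is terminal it is chain stable, so for this $\epsilon$ there is $\delta>0$ such that every $\delta$-chain of $f$ issuing from $C$ stays in $B_\epsilon(C)$. Next I would check that every $x\in B_\delta(C)$ has $\omega(x,f)\subset C$: choose $c\in C$ with $d(x,c)=d(x,C)\le\delta$ and put $c'=f^{-1}(c)$, which lies in $C$ because $f(C)=C$ and $f$ is a homeomorphism; then $d(f(c'),x)=d(c,x)\le\delta$, so $(c',x,f(x),f^2(x),\dots,f^N(x))$ is a $\delta$-chain of $f$ issuing from $C$ for every $N\ge0$. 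Hence $d(f^N(x),C)\le\epsilon$ for all $N\ge0$, so $\omega(x,f)\subset B_\epsilon(C)$, and since $\omega(x,f)\subset CR(f)$ this gives $\omega(x,f)\subset B_\epsilon(C)\cap CR(f)\subset C$.

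To finish, fix $x\in B_\delta(C)$ and any $p\in\omega(x,f)\subset C$. Following the true orbit of $x$ until it enters the $\delta'$-ball around $p$ gives, for every $\delta'>0$, a $\delta'$-chain of $f$ from $x$ to $p$, so $x\rightarrow p$. Now I would invoke initiality. Since $C\in\mathcal{C}_{\rm ter}(f^{-1})$, the set $C$ is chain stable for $f^{-1}$ (it is $f^{-1}$-invariant, as $f^{-1}(C)=C$). By the criterion recorded in Section 1 applied to $f^{-1}$, this means that $u\rightarrow v$ with respect to $f^{-1}$ forces $v\in C$ whenever $u\in C$; and since $u\rightarrow v$ with respect to $f^{-1}$ is equivalent to $v\rightarrow u$ with respect to $f$, the statement becomes: $v\rightarrow u$ with respect to $f$ and $u\in C$ force $v\in C$. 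Taking $u=p$ and $v=x$, the relation $x\rightarrow p$ just established yields $x\in C$. Hence $B_\delta(C)\subset C$, so $C$ is open and therefore clopen in $X$.

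The step I expect to need the most care is the trapping claim: a point $x$ merely close to $C$ need not have $f(x)$ close to $C$, so the orbit of $x$ is not a priori a chain issuing from $C$. The device is to prepend $f^{-1}(c)\in C$ (available because $f$ is a surjective homeomorphism of the invariant set $C$), which turns the orbit of $x$ into a genuine $\delta$-chain out of $C$ so that chain stability applies; the hypothesis that $C$ is clopen in $CR(f)$ is then used precisely to upgrade ``$\omega(x,f)$ lies near $C$'' to ``$\omega(x,f)\subset C$'', which is what makes the initiality step bite.
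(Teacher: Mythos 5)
Your proposal is correct and follows essentially the same route as the paper: use terminality (chain stability) to trap the forward orbit of any point in a small neighborhood of $C$, use clopenness in $CR(f)$ to conclude $\omega(x,f)\subset C$, hence $x\rightarrow p$ for some $p\in C$, and then invoke initiality to get $x\in C$, so $B_\delta(C)\subset C$. The only difference is that you spell out details the paper leaves implicit (prepending $f^{-1}(c)$ to make the orbit a genuine $\delta$-chain from $C$, and the reformulation of initiality via chains for $f^{-1}$), which is fine.
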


\begin{proof}
Since $C$ is clopen in $CR(f)$, there is $r>0$ such that for any $D\in\mathcal{C}(f)$, $B_r(C)\cap D\ne\emptyset$ implies $C=D$. Since $C\in\mathcal{C}_{\rm ter}(f)$ ($C$ is chain stable), there is $a>0$ such that $\{f^i(x)\colon i\ge0\}\subset B_r(C)$ for all $x\in B_a(C)$. Given $x\in B_a(C)$, we take $C(x,f)\in\mathcal{C}(f)$ such that $\omega(x,f)\subset C(x,f)$ and so
\[
\lim_{i\to\infty}d(f^i(x),C(x,f))=0.
\]
As $\omega(x,f)\subset B_r(C)$, we have $B_r(C)\cap C(x,f)\ne\emptyset$ and so $C=C(x,f)$. It follows that
\[
\lim_{i\to\infty}d(f^i(x),C)=0;
\]
therefore, $x\rightarrow y$ for some $y\in C$. By $C\in\mathcal{C}_{\rm ini}(f)$, we obtain $x\in C$. Since $x\in B_a(C)$ is arbitrary, we conclude that $B_a(C)\subset C$, thus, $C$ is open (and so clopen) in $X$, completing the proof.
\end{proof}

Let us complete the proof of Theorem 1.1.

\begin{proof}[Proof of Theorem 1.1]
Since ${\rm int}[\Lambda]\ne\emptyset$, $f|_\Lambda\colon\Lambda\to\Lambda$ is chain transitive, and $f$ has shadowing on $B_b(\Lambda)$, by Lemma 4.3, we have $\Lambda\in\mathcal{C}_{\rm ini}(f)\cap\mathcal{C}_{\rm ter}(f)$. Since $f$ has L-shadowing on $B_b(\Lambda)$, by Lemma 2.4, $\Lambda$ is clopen in $CR(f)$. By using Lemma 4.4, we conclude that $X=\Lambda$, completing the proof of the theorem.
\end{proof}

Here, we give a proof of Corollary 1.1.

\begin{proof}[Proof of Corollary 1.1]
We fix $x\in{\rm int}[\Lambda]\cap{\rm int}[CR(f)]$. As $x\in{\rm int}[CR(f)]$ and $X$ is locally connected, there is an open connected subset $V$ of $X$ such that $x\in V\subset CR(f)$. By taking $C\in\mathcal{C}(f)$ with $x\in C$, we obtain $V\subset C$ and so ${\rm int}[C]\ne\emptyset$. Since
$f(\Lambda)\subset\Lambda$, $x\in{\rm int}[\Lambda]$, and $f$ has shadowing on $B_b(\Lambda)$, Lemma 3.1 implies that $y\in\Lambda$ for all $y\in X$ with $x\rightarrow y$. For any $y\in C$, by $x,y\in C$, we obtain $x\rightarrow y$ and so $y\in\Lambda$. It follows that $C\subset\Lambda$. Since  
\begin{itemize}
\item $X$ is connected,
\item ${\rm int}[C]\ne\emptyset$,
\item $f|_C\colon C\to C$ is chain transitive,
\item $f$ has shadowing and L-shadowing on $B_b(C)$,
\end{itemize}
by Theorem 1.1, we conclude that $X=C$ (and so $X=\Lambda$) and $f$ is a mixing homeomorphism, completing the proof. 
\end{proof}

We shall present two corollaries.

\begin{cor}
Let $f\colon X\to X$ be a homeomorphism. If
\begin{itemize}
\item $X$ is connected,
\item ${\rm int}[CR(f)]\ne\emptyset$,
\item $\mathcal{C}(f)$ is a finite set, 
\item $f$ has shadowing on $X$,
\end{itemize}
then $f$ is a mixing homeomorphism.
\end{cor}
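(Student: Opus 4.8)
The plan is to mimic the proof of Theorem 1.1, using the finiteness of $\mathcal{C}(f)$ in place of the L-shadowing hypothesis (which supplied, via Lemma 2.4, the clopenness in $CR(f)$ of the relevant chain component), and with no need for local connectedness.

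First I would locate a chain component with nonempty interior. Since $\mathcal{C}(f)$ is finite, $CR(f)=C_1\sqcup\cdots\sqcup C_n$ is a finite disjoint union of closed sets, so each $C_i$ is clopen in $CR(f)$. Fix a nonempty open $U\subseteq CR(f)$, which exists because ${\rm int}[CR(f)]\ne\emptyset$. The sets $U\cap C_i$ partition $U$ into finitely many closed subsets of $U$; being complements in $U$ of unions of the others, they are also open in $U$, and since $U$ is open in $X$, each $U\cap C_i$ is open in $X$. Choosing $i$ with $U\cap C_i\ne\emptyset$, we obtain a chain component $C:=C_i$ with ${\rm int}[C]\ne\emptyset$.

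Next I would check the hypotheses of Lemma 4.3 with $\Lambda=C$: $C$ is closed and $f$-invariant with $f(C)=C$, $f|_C\colon C\to C$ is chain transitive, ${\rm int}[C]\ne\emptyset$, and, since $f$ has shadowing on $X$, it has shadowing on $B_b(C)$ for every $b>0$. Lemma 4.3 then yields $C\in\mathcal{C}_{\rm ini}(f)\cap\mathcal{C}_{\rm ter}(f)$. Combining $C$ clopen in $CR(f)$ (from the finiteness of $\mathcal{C}(f)$) with Lemma 4.4 gives that $C$ is clopen in $X$; since $X$ is connected and $C\ne\emptyset$, this forces $X=C$. In particular $X=CR(f)$ and $f=f|_C$ is chain transitive.

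Finally, since $X$ is connected and $X=CR(f)$, the result \cite[Corollary 14]{RW} (recalled in Remark 1.1) gives that $f$ is chain mixing, and since $f$ has shadowing on $X$, it is therefore mixing. I expect no serious obstacle: the only step needing care is the first one, where the finiteness of $\mathcal{C}(f)$ is precisely what replaces both the local connectedness used in Corollary 1.1 and the L-shadowing used in Theorem 1.1, producing a chain component that has nonempty interior and is simultaneously clopen in $CR(f)$.
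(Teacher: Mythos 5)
Your proposal is correct and follows essentially the same route as the paper's proof: locate a chain component $C$ with nonempty interior using the finiteness of $\mathcal{C}(f)$, apply Lemma 4.3 to get $C\in\mathcal{C}_{\rm ini}(f)\cap\mathcal{C}_{\rm ter}(f)$, note $C$ is clopen in $CR(f)$ by finiteness, and conclude via Lemma 4.4 and connectedness that $X=C$, whence $f$ is chain transitive and hence mixing. Your explicit partition argument for finding $C$ with ${\rm int}[C]\ne\emptyset$ just spells out what the paper states in one line.
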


\begin{proof}
Since ${\rm int}[CR(f)]\ne\emptyset$ and $\mathcal{C}(f)$ is a finite set, we have ${\rm int}[C]\ne\emptyset$ for some $C\in\mathcal{C}(f)$. As $f$ has shadowing on $X$, by Lemma 4.3, we obtain $C\in\mathcal{C}_{\rm ini}(f)\cap\mathcal{C}_{\rm ter}(f)$. Since $\mathcal{C}(f)$ is a finite set and so $C$ is clopen in $CR(f)$, by Lemma 4.4, $C$ is clopen in $X$. Because $X$ is connected, we conclude that $X=C$ and $f$ is mixing, proving the corollary.
\end{proof}

Given a homeomorphism $f\colon X\to X$, we easily see that if $f$ has L-shadowing on $X$, then $f$ has shadowing on $X$. If $f$ has L-shadowing on $X$, then by Lemma 2.4, every $C\in\mathcal{C}(f)$ is clopen in $CR(f)$; therefore, $\mathcal{C}(f)$ is a finite set. From Corollary 4.1, we obtain the following corollary. 

\begin{cor}
Let $f\colon X\to X$ be a homeomorphism. If
\begin{itemize}
\item $X$ is connected,
\item ${\rm int}[CR(f)]\ne\emptyset$, 
\item $f$ has L-shadowing on $X$,
\end{itemize}
then $f$ is a mixing homeomorphism.
\end{cor}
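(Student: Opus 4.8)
The plan is to deduce this from Corollary 4.1, following the remark in the paragraph immediately preceding the statement. Concretely, I would verify two auxiliary facts and then invoke Corollary 4.1: (i) L-shadowing on $X$ implies ordinary shadowing on $X$, and (ii) L-shadowing on $X$ forces $\mathcal{C}(f)$ to be finite.

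For (i), given a $\delta$-chain $(x_i)_{i=0}^k$ of $f$ I would extend it to a bi-infinite sequence $\xi=(y_i)_{i\in\mathbb{Z}}$ by putting $y_i=f^i(x_0)$ for $i<0$, $y_i=x_i$ for $0\le i\le k$, and $y_i=f^{i-k}(x_k)$ for $i>k$. Off the block $\{0,\dots,k\}$ the successive errors $d(f(y_i),y_{i+1})$ vanish identically, while inside them they are at most $\delta$, so $\xi$ is a $\delta$-limit-pseudo orbit of $f$ (trivially contained in $X$). Applying L-shadowing on $X$ and retaining only the coordinates $0\le i\le k$ of the conclusion shows that $(x_i)_{i=0}^k$ is $\epsilon$-shadowed; since $\epsilon$ was arbitrary, $f$ has shadowing on $X$. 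For (ii), note that L-shadowing on $X$ restricts to L-shadowing on $B_b(C)$ for every $C\in\mathcal{C}(f)$ and every $b>0$, because a $\delta$-limit-pseudo orbit contained in the smaller set is a fortiori one contained in $X$. Hence Lemma 2.4 applies to each $C$, so every $C\in\mathcal{C}(f)$ is clopen in $CR(f)$. As $\mathcal{C}(f)$ is a partition of the compact set $CR(f)$ into relatively open pieces, it must be finite.

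Now $X$ is connected, ${\rm int}[CR(f)]\ne\emptyset$ by hypothesis, $\mathcal{C}(f)$ is finite by (ii), and $f$ has shadowing on $X$ by (i); so Corollary 4.1 applies and yields that $f$ is a mixing homeomorphism. I do not anticipate any serious obstacle here: the only step that requires an actual argument rather than a citation is the pseudo-orbit splicing in (i), and even that is routine.
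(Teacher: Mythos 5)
Your proposal is correct and follows exactly the route the paper takes: the paragraph preceding the corollary asserts that L-shadowing on $X$ implies shadowing on $X$ and, via Lemma 2.4, that every chain component is clopen in $CR(f)$ so that $\mathcal{C}(f)$ is finite, and then invokes Corollary 4.1. You have merely filled in the splicing argument that the paper leaves as ``we easily see,'' and your details check out.
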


At the end of this section, we give a direct proof of Theorem 1.1 using Lemma 4.3. For a homeomorphism $f\colon X\to X$ and $x\in X$, we define subsets $W^u(x),W^s(x)$ of $X$ by
\[
W^u(x)=\{y\in X\colon\lim_{i\to-\infty}d(f^i(x),f^i(y))=0\},
\]
\[
W^s(x)=\{y\in X\colon\lim_{i\to+\infty}d(f^i(x),f^i(y))=0\}.
\]

\begin{proof}[A direct proof of Theorem 1.1]
By Lemma 4.3, we obtain $\Lambda\in\mathcal{C}_{\rm ini}(f)\cap\mathcal{C}_{\rm ter}(f)$. It follows that $W^u(x)\cup W^s(x)\subset\Lambda$ for all $x\in\Lambda$. Let $\epsilon>0$. Since $f$ has L-shadowing on $B_b(\Lambda)$, there is $\delta>0$ such that every $\delta$-limit-pseudo orbit of $f$ in $B_b(\Lambda)$ is $\epsilon$-limit shadowed by some point of $X$. Since $\Lambda\in\mathcal{C}_{\rm ter}(f)$ ($\Lambda$ is chain stable), there is $a>0$ such that $\{f^i(y)\colon i\ge0\}\subset B_b(\Lambda)$ for all $y\in B_a(\Lambda)$. Given any $x\in\Lambda$ and $y\in X$ with $d(x,y)\le\min\{\delta,a\}$, we define $\xi=(x_i)_{i\in\mathbb{Z}}$ by $x_i=f^i(y)$ for all $i\ge0$ and $x_i=f^i(x)$ for all $i<0$. Since $\xi$ is a $\delta$-limit-pseudo orbit of $f$ in $B_b(\Lambda)$, $\xi$ is $\epsilon$-limit shadowed by some $z\in X$. Then, we have $z\in W^u(x)\cap W^s(y)$. By $x\in\Lambda$ and so $W^u(x)\subset\Lambda$, we obtain $z\in\Lambda$. It follows that $y\in W^s(z)\subset\Lambda$. Since $x\in\Lambda$ and $y\in X$ with $d(x,y)\le\min\{\delta,a\}$ are arbitrary, we conclude that $\Lambda$ is open (and so clopen) in $X$, completing the proof of Theorem 1.1.
\end{proof}

\section{Proof of Theorem 1.2 and corollaries}

In this section, we prove Theorem 1.2 and present some corollaries. For the proof of Theorem 1.2, we need a lemma.

\begin{lem}
Let $f\colon X\to X$ be a homeomorphism and let $\Lambda$ be an attractor for $f$. For any $x\in\partial\Lambda$, there is $z\in X\setminus\Lambda$ such that $z\rightarrow x$. 
\end{lem}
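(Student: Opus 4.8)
The plan is to realize $z$ as a subsequential limit of pull‑backs, along negative iterates of $f$, of a sequence of points of $X\setminus\Lambda$ converging to $x$; the attractor structure is what will force these pull‑backs — and hence $z$ — to stay at a definite distance from $\Lambda$, while at the same time each of them lies on a genuine $f$-orbit segment whose far end is close to $x$. To set this up, fix an open set $U$ with $f(\overline U)\subseteq U$ and $\Lambda=\bigcap_{i\ge0}f^i(U)$. Then $f(\overline U)\subseteq U\subseteq\overline U$, so the compact sets $f^i(\overline U)$ decrease, and $\bigcap_{i\ge0}f^i(\overline U)=\Lambda$ (the inclusion $\subseteq$ uses $f^{i+1}(\overline U)\subseteq f^i(U)$, and $\supseteq$ uses $f^i(\Lambda)=\Lambda$); in particular $\Lambda\subseteq U$. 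Since $x\in\partial\Lambda\subseteq\Lambda\subseteq U$ with $U$ open, I would choose a sequence $(y_n)_{n\ge1}$ in $U\setminus\Lambda$ with $y_n\to x$. As $y_n\notin\Lambda=\bigcap_{i\ge0}f^i(\overline U)$ and the sets $f^i(\overline U)$ decrease, one may fix $k_n\ge2$ with $y_n\notin f^{k_n}(\overline U)$; putting $z_n:=f^{-k_n}(y_n)$ gives $z_n\notin\overline U$, so $z_n\in X\setminus U$. Because $X\setminus U$ is a closed subset of the compact space $X$ disjoint from $\Lambda$, after passing to a subsequence I may take $z_n\to z$ for some $z\in X\setminus U$; in particular $z\in X\setminus\Lambda$.

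It remains to verify that $z\rightarrow x$. Given $\delta>0$, I would use the uniform continuity of $f$ to pick $\gamma>0$ with $d(a,b)\le\gamma\Rightarrow d(f(a),f(b))\le\delta$, and then take $n$ large enough that $d(z_n,z)\le\gamma$ and $d(y_n,x)\le\delta$. Then the finite sequence
\[
\bigl(z,\ f(z_n),\ f^2(z_n),\ \dots,\ f^{k_n-1}(z_n),\ x\bigr)
\]
has length $k_n\ge2$ and is a $\delta$-chain of $f$ from $z$ to $x$: the interior consecutive pairs $\bigl(f^i(z_n),f^{i+1}(z_n)\bigr)$ carry no error; the opening pair $\bigl(z,f(z_n)\bigr)$ has error $d(f(z),f(z_n))\le\delta$ because $d(z,z_n)\le\gamma$; and the closing pair $\bigl(f^{k_n-1}(z_n),x\bigr)$ has error $d\bigl(f(f^{k_n-1}(z_n)),x\bigr)=d\bigl(f^{k_n}(z_n),x\bigr)=d(y_n,x)\le\delta$. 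As $\delta>0$ was arbitrary, $z\rightarrow x$; together with $z\in X\setminus\Lambda$ this proves the lemma.

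I expect the construction in the first paragraph to be the only genuine obstacle. The obvious shortcut — taking $z$ to be a limit of points of $X\setminus\Lambda$ converging to $x$ (or to a limit point of the backward orbit of $x$) — does not work, since $\Lambda$ is closed and such limits typically land in $\partial\Lambda$, yielding nothing. The attractor hypothesis is exactly what prevents this: it supplies a neighbourhood $\overline U$ of $\Lambda$ from which the backward orbit of every point of $(X\setminus\Lambda)\cap\overline U$ must escape, so the $z_n$ sit in the fixed closed set $X\setminus U$, uniformly separated from $\Lambda$, while remaining on true $f$-orbit segments terminating near $x$; once that separation is secured, the pseudo-orbit above is routine. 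The one bookkeeping precaution is to take $k_n\ge2$ (possible because the $f^i(\overline U)$ decrease), so that the displayed sequence is genuinely a chain of length at least one with distinct opening and closing steps, which sidesteps a minor case analysis at $k_n=1$.
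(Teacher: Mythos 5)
Your proof is correct and takes essentially the same route as the paper's: pull back points of $X\setminus\Lambda$ converging to $x$ until they escape a fixed neighborhood of the attractor (the paper uses $B_a(\Lambda)$ with $\bigcap_{i\ge0}f^i(B_a(\Lambda))=\Lambda$, you use the trapping region $\overline U$ itself), extract a limit $z$ uniformly separated from $\Lambda$, and splice the resulting true orbit segment with small jumps at both ends to get $z\rightarrow x$. The paper leaves that final chain construction implicit, and your explicit verification (including the harmless $k_n\ge2$ precaution) supplies exactly that step.
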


\begin{proof}
We take $a>0$ such that
\[
\bigcap_{i\ge0}f^i(B_a(\Lambda))=\Lambda.
\]
Let $0<\delta_1>\delta_2>\cdots$ and $\lim_{j\to\infty}\delta_j=0$. Given $j\ge1$, by $x\in\partial\Lambda$, we have $d(x,z_j)\le\min\{a,\delta_j\}$ for some $z_j\in X\setminus\Lambda$. Note that $z_j\in B_a(x)\subset B_a(\Lambda)$. As $z_j\in X\setminus\Lambda$, there is $i_j\ge1$ such that $f^{-i_j}(z)\notin B_a(\Lambda)$. Taking a subsequence if necessary, we may assume that $\lim_{j\to\infty}f^{-{i_j}}(z_j)=z$ for some $z\in X$. Then, $d(z,\Lambda)\ge a>0$; therefore, $z\in X\setminus\Lambda$. Since $\lim_{j\to\infty}\delta_j=0$ and so $\lim_{j\to\infty}z_j=x$, we obtain $z\rightarrow x$, proving the lemma.
\end{proof}

Let us prove Theorem 1.2.

\begin{proof}[Proof of Theorem 1.2]
Assume that $x\in\partial\Lambda$, $y\in X$, and $x\rightarrow y$. We shall show that $y\in\partial\Lambda$. Since $\Lambda$ is an attractor for $f$, $\Lambda$ is chain stable, which implies $y\in\Lambda$. Since $x\in\partial\Lambda$, by Lemma 5.1, we have $z\rightarrow x$ for some $z\in X\setminus\Lambda$. By $z\rightarrow x$ and $x\rightarrow y$, we obtain $z\rightarrow y$. Since
\begin{itemize}
\item $f(\Lambda)=\Lambda$ and so $f^{-1}(\Lambda)=\Lambda$,
\item $y\in\Lambda$, $z\in X\setminus\Lambda$, and $z\rightarrow y$,
\item $f$ has shadowing on $B_b(\Lambda)$,
\end{itemize}
from Lemma 3.1, it follows that $y\in\partial\Lambda$; thus, the theorem has been proved.
\end{proof}

We have the following basic lemma.

\begin{lem}
Let $f\colon X\to X$ be a continuous map and let $S$ be a closed $f$-invariant subset of $X$. If
\begin{itemize}
\item $f(S)=S$,
\item $S$ is chain stable,
\end{itemize}
then for any $a>0$, there is an attractor $\Lambda$ for $f$ such that $S\subset\Lambda\subset B_a(S)$.
\end{lem}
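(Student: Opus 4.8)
The plan is to build a suitable trapping region by hand and then take the associated attractor. Fix $\epsilon$ with $0<\epsilon\le a$. Since $S$ is chain stable, choose $\delta>0$ so that every $\delta$-chain $(x_i)_{i=0}^k$ of $f$ with $x_0\in S$ satisfies $d(x_k,S)\le\epsilon$. Define
\[
U=\{y\in X\colon\text{there are }k\ge1\text{ and }x_0,\dots,x_k\in X\text{ with }x_0\in S,\ x_k=y,\ d(f(x_i),x_{i+1})<\delta\ (0\le i\le k-1)\},
\]
the set of endpoints of strict $\delta$-chains of $f$ issued from $S$. I claim $U$ is an open set with $S\subset U$, $f(\overline U)\subset U$, and $\overline U\subset B_\epsilon(S)$; granting this, $\Lambda:=\bigcap_{i\ge0}f^i(U)$ will do the job.

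The inclusion $S\subset U$ uses the hypothesis $f(S)=S$: given $y\in S$ pick $x_0\in S$ with $f(x_0)=y$, so $(x_0,y)$ is an admissible chain. Openness of $U$ comes for free from the strict inequality: if $y\in U$ via $(x_0,\dots,x_k)$, then $d(f(x_{k-1}),y)<\delta$, so every $y'$ with $d(f(x_{k-1}),y')<\delta$ lies in $U$ via $(x_0,\dots,x_{k-1},y')$. Finally $U\subset B_\epsilon(S)$ is just the chain-stability estimate applied to a witnessing chain (a strict $\delta$-chain is a $\delta$-chain), and $\overline U\subset B_\epsilon(S)\subset B_a(S)$ because $B_\epsilon(S)$ is closed and $\epsilon\le a$.

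The heart of the argument, and the step I expect to be the only real obstacle, is $f(\overline U)\subset U$. Take $y\in\overline U$ and a sequence $y^{(n)}\in U$ with $y^{(n)}\to y$, each $y^{(n)}$ reached by a strict $\delta$-chain $(x_0^{(n)},\dots,x_{k_n}^{(n)})$ from $S$. By continuity of $f$ we have $d(f(y^{(n)}),f(y))\to0$, so for all large $n$ the extended sequence $(x_0^{(n)},\dots,x_{k_n}^{(n)},f(y))$ is again a strict $\delta$-chain from $S$ ending at $f(y)$; hence $f(y)\in U$. (It is exactly the strictness built into the definition of $U$ that leaves room to absorb the perturbation $d(f(y^{(n)}),f(y))$.)

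With $U$ open and $f(\overline U)\subset U\subset\overline U$, the compact sets $f^i(\overline U)$ decrease, and $\Lambda=\bigcap_{i\ge0}f^i(\overline U)=\bigcap_{i\ge0}f^i(U)$ (the two intersections coincide since $f^{i+1}(\overline U)\subset f^i(U)\subset f^i(\overline U)$). Then $\Lambda$ is closed, $\Lambda=\bigcap_{i\ge0}f^i(U)$, and $f(\Lambda)=\bigcap_{i\ge1}f^i(\overline U)=\Lambda$ because a continuous map sends a decreasing intersection of nonempty compacta onto the intersection of the images; hence $\Lambda$ is an attractor for $f$. Since $S\subset U$ and $f(S)=S$ give $S=f^i(S)\subset f^i(U)$ for every $i$, we get $S\subset\Lambda$, while $\Lambda\subset\overline U\subset B_\epsilon(S)\subset B_a(S)$, as required. (In the degenerate case $S=\emptyset$, take $U=\emptyset$ and $\Lambda=\emptyset$.)
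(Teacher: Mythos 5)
Your proof is correct and follows essentially the same route as the paper: both build the trapping region as the set of endpoints of $\delta$-chains issued from $S$ and take $\Lambda=\bigcap_{i\ge0}f^i(U)$, the only difference being that you use strict inequalities to make $U$ open directly where the paper passes to the interior of the non-strict version. Your write-up also supplies the verifications (openness, $f(\overline U)\subset U$, $f(\Lambda)=\Lambda$) that the paper leaves implicit.
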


\begin{proof}
For $\delta>0$, let $A$ be the set of $y\in X$ such that there are $x\in S$ and a $\delta$-chain $(x_i)_{i=0}^k$ of $f$ with $x_0=x$ and $x_k=y$. Then, we see that $S\subset A$ and $f(\overline{A})\subset{\rm int}[A]$. Since $S$ is chain stable, we have $A\subset B_a(S)$ for some $\delta>0$. Letting $U={\rm int}[A]$, we obtain $f(\overline{U})\subset U$ and $S\subset U\subset B_a(S)$. Letting $\Lambda=\bigcap_{i\ge0}f^i(U)$, we see that $\Lambda$ is an attractor for $f$ and satisfies $S\subset\Lambda\subset B_a(S)$, completing the proof.
\end{proof}

By Theorem 1.2 and Lemma 5.2, we obtain the following corollary.

\begin{cor}
Let $f\colon X\to X$ be a homeomorphism and let $C\in\mathcal{C}(f)$. If
\begin{itemize}
\item X is connected,
\item $C\in\mathcal{C}_{\rm ter}(f)$ and $X\ne C$,
\item $f$ has shadowing on $B_b(C)$ for some $b>0$,
\end{itemize}
then, for every $a>0$, there is $D\in\mathcal{C}_{\rm ter}(f)$ such that ${\rm int}[D]=\emptyset$ and $D\subset B_a(C)$.
\end{cor}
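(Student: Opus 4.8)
The plan is to realize $C$ as sitting inside a small attractor whose boundary is chain stable (by Theorem 1.2), and then to extract a terminal chain component from that boundary. Since the assertion for a given $a$ follows from the assertion for any smaller value, I may assume $0<a<\min\{b,d(p,C)\}$, where $p$ is a fixed point of $X\setminus C$ (it exists because $X\ne C$, and $d(p,C)>0$ since $C$ is closed). As $C$ is chain stable and $f(C)=C$, Lemma 5.2 gives an attractor $\Lambda$ with $C\subset\Lambda\subset B_a(C)$. Because $a<d(p,C)$ we have $p\notin B_a(C)\supset\Lambda$, so $\Lambda$ is a nonempty closed proper subset of $X$; as $X$ is connected, $\Lambda$ is not clopen, hence $\partial\Lambda\ne\emptyset$. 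Also $B_{b-a}(\Lambda)\subset B_b(C)$ and $b-a>0$, so $f$ has shadowing on $B_{b-a}(\Lambda)$; Theorem 1.2 then tells us that $\partial\Lambda$ is chain stable.

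Next I would reduce everything to one statement: \emph{it suffices to find a terminal chain component $D$ of $f$ with $D\subset\partial\Lambda$}. Granting this, $D\subset\partial\Lambda\subset\Lambda\subset B_a(C)$ gives the location; and since $\Lambda$ is closed, $\partial\Lambda=\Lambda\setminus\mathrm{int}[\Lambda]$ has empty interior (any nonempty open subset of $\partial\Lambda$ would lie in $\mathrm{int}[\Lambda]$ yet be disjoint from it), so $\mathrm{int}[D]\subset\mathrm{int}[\partial\Lambda]=\emptyset$, giving $\mathrm{int}[D]=\emptyset$. Thus $D$ is exactly the component required by the corollary.

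The remaining point — which I expect to be the main obstacle — is the general fact that \emph{every nonempty closed $f$-invariant chain stable set $S$ contains a terminal chain component of $f$}; we apply it with $S=\partial\Lambda$. First, $S$ contains at least one chain component: for $x\in S$ we have $\omega(x,f)\subset S$, and writing $C(x,f)$ for the chain component containing $\omega(x,f)$, chain stability of $S$ forces $C(x,f)\subset S$. Now order $\mathcal{F}=\{C\in\mathcal{C}(f):C\subset S\}$ by $C'\le C''\iff C''\rightarrow C'$; a minimal element of $\mathcal{F}$ is precisely a terminal chain component contained in $S$ (if $D$ is minimal and $D\rightarrow E$ with $E\ne D$, then $E\subset S$ by chain stability, so $E\in\mathcal{F}$ and $E<D$, a contradiction). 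To run Zorn's lemma I must produce a lower bound for an arbitrary chain $\{C_\alpha\}$ in $\mathcal{F}$: for each $\alpha$ fix $x_\alpha\in C_\alpha$ and put $D_\alpha=\overline{\{y\in X:x_\alpha\rightarrow y\}}$. One checks $D_\alpha$ is a closed $f$-invariant chain stable set with $C_\alpha\subset D_\alpha\subset S$, and that $C_\alpha\le C_\beta$ implies $D_\alpha\subset D_\beta$, so $\{D_\alpha\}$ is a totally ordered family of nonempty closed subsets of $X$. By compactness $D_\infty:=\bigcap_\alpha D_\alpha$ is a nonempty closed $f$-invariant chain stable subset of $S$, and any chain component $C_\infty\subset D_\infty$ (which exists by the first part) satisfies $C_\infty\subset D_\infty\subset D_\alpha$ for all $\alpha$, i.e.\ $C_\infty$ is a lower bound for $\{C_\alpha\}$. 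Zorn's lemma then supplies a minimal $D\in\mathcal{F}$, finishing the proof.

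The one step that needs genuine care is the verification that the hull $D_\alpha=\overline{\{y:x_\alpha\rightarrow y\}}$ is chain stable — that $p\in D_\alpha$ and $p\rightarrow q$ force $q\in D_\alpha$: if $p_n\to p$ with $x_\alpha\rightarrow p_n$, then for $n$ large a $\delta/2$-chain from $p$ to $q$ can be prepended with $p_n$ (using uniform continuity of $f$) to give a $\delta$-chain from $p_n$ to $q$, so $x_\alpha\rightarrow p_n\rightarrow q$; letting $\delta\to0$ yields $q\in D_\alpha$. That closedness-of-the-hull observation, together with the nested-intersection compactness argument, is the only nontrivial ingredient; everything else is routine bookkeeping with the definitions and with Lemma 5.2 and Theorem 1.2.
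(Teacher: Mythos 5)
Your proposal is correct and follows the same route as the paper: build a small attractor $\Lambda$ with $C\subset\Lambda\subset B_a(C)$ via Lemma 5.2, rule out $\partial\Lambda=\emptyset$ by connectedness, apply Theorem 1.2 to get chain stability of $\partial\Lambda$, and then extract a terminal chain component from $\partial\Lambda$. (Your explicit check that $B_{b-a}(\Lambda)\subset B_b(C)$, so that Theorem 1.2 really applies, is a detail the paper leaves implicit; good.) The only divergence is the final extraction step: the paper simply takes $D\in\mathcal{C}_{\rm ter}(f|_{\partial\Lambda})$ and upgrades it to a terminal component of $f$ using chain stability of $\partial\Lambda$ (the existence of such a $D$ for the subsystem being Lemma 5.3 applied to $f|_{\partial\Lambda}$), whereas you prove from scratch, by Zorn's lemma on the chain components contained in a chain stable set, that every nonempty closed invariant chain stable set contains a terminal chain component of $f$. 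Your argument is self-contained and avoids reasoning about the restricted system, at the cost of the hull/nested-intersection machinery; both are valid. One line of yours needs completing: in ``a minimal element $D$ of $\mathcal{F}$ is terminal,'' terminality concerns arbitrary $y\in X$ with $x\rightarrow y$ for $x\in D$, and such a $y$ need not lie in any chain component. The repair is within your own framework: chain stability of $S$ gives $y\in S$, hence $\omega(y,f)\subset S$ and $D\rightarrow C(y,f)\subset S$; minimality forces $C(y,f)=D$, so $y\rightarrow z$ for some $z\in D$, whence $y\rightarrow x$ and therefore $y\in D$.
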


\begin{proof}
Since $X\ne C$, we have $X\ne B_c(C)$ for some $c>0$. Let $0<a<\min\{b,c\}$. As $C\in\mathcal{C}_{\rm ter}(f)$ ($C$ is stable), by Lemma 5.2, there is an attractor $\Lambda$ for $f$ such that $C\subset\Lambda\subset B_a(C)$. If $\partial\Lambda=\emptyset$, then $\Lambda$ is clopen in $X$. Since $X$ is connected, this implies $X=\Lambda$ and so $X=B_c(C)$, a contradiction. It follows that $\partial\Lambda\ne\emptyset$. Since $\Lambda\subset B_a(C)$ and $f$ has shadowing on $B_b(C)$, Theorem 1.2 implies that $\partial\Lambda$ is chain stable. By taking $D\in\mathcal{C}_{\rm ter}(f|_{\partial\Lambda})$, we obtain $D\in\mathcal{C}_{\rm ter}(f)$, ${\rm int}[D]=\emptyset$, and $D\subset B_a(C)$, proving the corollary.
\end{proof}

A proof of the following lemma can be found in \cite{K}.

\begin{lem}[{\cite[Lemma 2.1]{K}}]
Let $f\colon X\to X$ be a continuous map. For any $x\in X$, there are $C\in\mathcal{C}_{\rm ter}(f)$ and $y\in C$ such that $x\rightarrow y$.
\end{lem}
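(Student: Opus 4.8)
The plan is to isolate, by a Zorn's lemma argument, a $\subseteq$-minimal closed chain stable set inside the set of points that are chain reachable from $x$, and then to verify that any such minimal set is automatically a terminal chain component.

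First I would introduce $\Omega_x=\{y\in X\colon x\rightarrow y\}$ and record its basic properties. It is nonempty since $f(x)\in\Omega_x$, and it is closed: given $y_j\to y$ with $x\rightarrow y_j$, fix $\delta>0$, take a $(\delta/2)$-chain from $x$ to $y_j$ with $j$ large enough that $d(y_j,y)<\delta/2$, and replace its final point $y_j$ by $y$; since the preceding point $x_{k-1}$ satisfies $d(f(x_{k-1}),y_j)\le\delta/2$, we get $d(f(x_{k-1}),y)<\delta$, so the modified sequence is a $\delta$-chain and $x\rightarrow y$. It is $f$-invariant because appending $f(y)$ to a chain from $x$ to $y$ gives a chain from $x$ to $f(y)$. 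Finally, transitivity of $\rightarrow$ shows that $y\in\Omega_x$ and $y\rightarrow z$ imply $z\in\Omega_x$, so by the equivalence for closed $f$-invariant sets stated in Section 1, $\Omega_x$ is chain stable.

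Next I would apply Zorn's lemma to the family $\mathcal{F}$ of nonempty closed chain stable subsets of $X$ contained in $\Omega_x$, ordered by reverse inclusion. It is nonempty since $\Omega_x\in\mathcal{F}$. Given a totally ordered subfamily $\{S_\alpha\}$, its intersection $S=\bigcap_\alpha S_\alpha$ is nonempty by compactness of $X$ and the finite intersection property, closed, $f$-invariant, and chain stable (if $y\in S$ and $y\rightarrow z$, then $z\in S_\alpha$ for every $\alpha$ because each $S_\alpha$ is chain stable), hence $S\in\mathcal{F}$ is an upper bound. Let $C$ be a maximal element of $(\mathcal{F},\supseteq)$, i.e., a $\subseteq$-minimal member of $\mathcal{F}$. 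It then remains to show $C\in\mathcal{C}_{\rm ter}(f)$; as $C\subseteq\Omega_x$, any $y\in C$ then satisfies $x\rightarrow y$, which finishes the proof. The key point is that for every $y\in C$ the set $\Omega_y$ is nonempty, closed, $f$-invariant and chain stable, and $\Omega_y\subseteq C$ because $C$ is chain stable and $y\in C$; hence $\Omega_y\in\mathcal{F}$, and minimality forces $\Omega_y=C$. In particular $y\in\Omega_y$, so $y\rightarrow y$ and $C\subseteq CR(f)$; for $y,z\in C$ we get $z\in C=\Omega_y$ and $y\in C=\Omega_z$, so $y\leftrightarrow z$ and $C$ lies in a single chain component $C'\in\mathcal{C}(f)$; and for any $w\in C'$ and $y\in C$, $y\rightarrow w$ gives $w\in\Omega_y=C$, so $C=C'$. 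Thus $C$ is a chain component, and it is chain stable by construction, so $C\in\mathcal{C}_{\rm ter}(f)$.

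The main obstacle I anticipate is not a computation but the conceptual point that Zorn's lemma (equivalently, the existence of minimal members in the hierarchy of closed chain stable sets) is genuinely required: when $\mathcal{C}(f)$ is infinite there need be no ``last'' chain component reachable from $x$ constructed in any finite way, and it is precisely the minimality of $C$ that upgrades ``closed chain stable'' to ``terminal chain component.'' The rest of the argument is bookkeeping with the characterization of chain stability and the transitivity of $\rightarrow$.
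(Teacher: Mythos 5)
Your argument is correct. Note that the paper does not prove this lemma itself but only cites \cite[Lemma 2.1]{K}, so there is no in-paper proof to compare against; your Zorn's-lemma construction of a $\subseteq$-minimal nonempty closed chain stable subset of $\Omega_x=\{y\colon x\rightarrow y\}$, followed by the verification that minimality forces $\Omega_y=C$ for every $y\in C$ and hence that $C$ is a terminal chain component, is the standard Conley-type argument and is complete as written (in particular you correctly handle the point that $y\rightarrow y$ is not automatic and must be extracted from $y\in\Omega_y=C$).
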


We present a corollary of Lemma 5.2 and Lemma 5.3.

\begin{cor}
Let $f\colon X\to X$ be a homeomorphism and let $C\in\mathcal{C}(f)$. If
\begin{itemize}
\item $C\in\mathcal{C}_{\rm ini}(f)\cap\mathcal{C}_{\rm ter}(f)$,
\item $C$ is not clopen in $X$,
\end{itemize}
then, for every $a>0$, there are $D\in\mathcal{C}_{\rm ter}(f)\setminus\{C\}$ and $E\in\mathcal{C}_{\rm ini}(f)\setminus\{C\}$ such that $D\cup E\subset B_a(C)$.
\end{cor}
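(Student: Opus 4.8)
The plan is to establish, for every $a>0$, the single statement that there is $D\in\mathcal{C}_{\rm ter}(f)\setminus\{C\}$ with $D\subset B_a(C)$, and then to apply this statement to $f^{-1}$. This is legitimate because $\mathcal{C}(f^{-1})=\mathcal{C}(f)$ (the relation $\leftrightarrow$ is unaffected by $f\mapsto f^{-1}$), while $\mathcal{C}_{\rm ter}(f^{-1})=\mathcal{C}_{\rm ini}(f)$ and $\mathcal{C}_{\rm ini}(f^{-1})=\mathcal{C}_{\rm ter}(f)$ hold by definition, so the two hypotheses on $C$ are symmetric under $f\leftrightarrow f^{-1}$; the statement for $f^{-1}$ then produces $E\in\mathcal{C}_{\rm ini}(f)\setminus\{C\}$ with $E\subset B_a(C)$, and hence $D\cup E\subset B_a(C)$.

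So fix $a>0$. Since $C$ is closed, $f(C)=C$, and $C$ is chain stable (it lies in $\mathcal{C}_{\rm ter}(f)$), Lemma 5.2 gives an attractor $\Lambda$ for $f$ with $C\subset\Lambda\subset B_a(C)$. The first thing to prove is $\Lambda\ne C$. Assuming $\Lambda=C$, write $\Lambda=\bigcap_{i\ge0}f^i(U)$ with $U$ open and $f(\overline U)\subset U$. For $x\in U$ one has $\omega(x,f)\subset\Lambda=C$ (a standard property of trapping regions), so $C(x,f)=C$ and $\lim_{i\to\infty}d(f^i(x),C)=0$, whence $x\rightarrow y$ for some $y\in C$; as $C\in\mathcal{C}_{\rm ini}(f)$, this forces $x\in C$. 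Thus $U\subset C$, and since also $C=\bigcap_{i\ge0}f^i(U)\subset U$, we get $C=U$, so $C$ is open and hence clopen in $X$, contradicting the hypothesis. Therefore $\Lambda\setminus C\ne\emptyset$.

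Now pick $x\in\Lambda\setminus C$ and apply Lemma 5.3 to obtain $D\in\mathcal{C}_{\rm ter}(f)$ and $y\in D$ with $x\rightarrow y$. Since $\Lambda$ is an attractor it is chain stable, so $x\in\Lambda$ and $x\rightarrow y$ give $y\in\Lambda$; then $y\in D\cap\Lambda$, and applying chain stability of $\Lambda$ to $y\rightarrow y'$ for each $y'\in D$ yields $D\subset\Lambda\subset B_a(C)$. Finally $D\ne C$: otherwise $y\in C$ and $x\rightarrow y$, and $C\in\mathcal{C}_{\rm ini}(f)$ would force $x\in C$, contrary to $x\notin C$. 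This gives the required $D$, and the same argument applied to $f^{-1}$ supplies $E$, completing the proof.

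I expect the main obstacle to be the step $\Lambda\ne C$: without a point of $\Lambda$ lying outside $C$ there is no way to use Lemma 5.3 to pick out a terminal component other than $C$, nor any other evident way to confine such a component to $B_a(C)$. The key idea resolving it is that an attractor which is also an initial chain component must be clopen --- this is where the initial-chain-component hypothesis enters --- after which the argument is just Lemma 5.2 (which needs $C$ chain stable), Lemma 5.3, and the characterisation of chain stability of a closed invariant set $S$ (namely $x\in S,\ x\rightarrow y\Rightarrow y\in S$) applied to $\Lambda$ and to $C$.
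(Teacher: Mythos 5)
Your proof is correct and follows essentially the same route as the paper: Lemma 5.2 produces an attractor $\Lambda$ with $C\subset\Lambda\subset B_a(C)$, initiality of $C$ rules out $\Lambda=C$ (otherwise $C$ would be clopen), Lemma 5.3 then yields a terminal component $D\ne C$ contained in $\Lambda$, and $E$ is obtained by applying the same argument to $f^{-1}$. The only (harmless, arguably cleaner) deviation is that you apply Lemma 5.3 to $f$ on $X$ and use chain stability of the attractor $\Lambda$ to force $D\subset\Lambda$, whereas the paper applies it to $f|_\Lambda$ and takes $D\in\mathcal{C}_{\rm ter}(f|_\Lambda)\setminus\{C\}$.
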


\begin{proof}
Since $C\in\mathcal{C}_{\rm ter}(f)$ ($C$ is chain stable), by Lemma 5.2, there is an attractor $\Lambda$ for $f$ such that $C\subset\Lambda\subset B_a(C)$. If $C=\Lambda$, then there is $c>0$
such that
\[
\lim_{i\to\infty}d(f^i(z),C)=0
\]
for all $z\in B_c(C)$. Since $C\in\mathcal{C}_{\rm ini}(f)$, this implies $B_c(C)\subset C$ and so $C$ is clopen in $X$. It follows that $C\ne\Lambda$. Note that $C\in\mathcal{C}_{\rm ter}(f|_\Lambda)$. If $\mathcal{C}_{\rm ter}(f|_\Lambda)=\{C\}$, then by Lemma 5.3, for any $x\in\Lambda\setminus C$, we have $x\rightarrow y$ for some $y\in C$, which contradicts $C\in\mathcal{C}_{\rm ini}(f)$. It follows that $\mathcal{C}_{\rm ter}(f|_\Lambda)\ne\{C\}$. By taking $D\in\mathcal{C}_{\rm ter}(f|_\Lambda)\setminus\{C\}$, we obtain $D\in\mathcal{C}_{\rm ter}(f)\setminus\{C\}$ and $D\subset B_a(C)$. By repeating the same argument for $f^{-1}$, we also obtain $E\in\mathcal{C}_{\rm ini}(f)\setminus\{C\}$ such that $E\subset B_a(C)$, completing the proof.
\end{proof}

\begin{rem}
\normalfont
Let $M$ be a closed differentiable manifold. In \cite[Theorem 5]{ABD}, it is shown that for a $C^1$-generic diffeomorphism $f\colon M\to M$, if a homoclinic class $H$ with $H\ne M$ is Lyapunov stable for $f$ and $f^{-1}$, then there are infinitely many (pairwise disjoint) homoclinic classes which accumulate to $H$.
\end{rem}

By Lemma 4.1 and Lemma 5.2, we shall prove the following.

\begin{lem}
Let $f\colon X\to X$ be a homeomorphism and let $C\in\mathcal{C}(f)$. If
\begin{itemize}
\item $C\notin\mathcal{C}_{\rm ini}(f)$,
\item $f$ has shadowing on $X$,
\end{itemize}
then there is an attractor $\Lambda$ for $f$ such that $C\subset\partial\Lambda$.
\end{lem}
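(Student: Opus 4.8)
The plan is to realize the attractor $\Lambda$ by applying Lemma 5.2 to the ``chain stable hull'' of $C$, and then to read off $C\subset\partial\Lambda$ from Lemma 4.1. First I would unwind the hypothesis: $C\notin\mathcal{C}_{\rm ini}(f)$ means $C\notin\mathcal{C}_{\rm ter}(f^{-1})$, i.e.\ $C$ is not chain stable for $f^{-1}$, so there are $u\in C$ and $v\in X\setminus C$ with $u\rightarrow v$ for $f^{-1}$, which is the same as $v\rightarrow u$ for $f$. Fixing a point $x_0\in C$ and using that $f|_C\colon C\to C$ is chain transitive (so $u\rightarrow x_0$ for $f$), concatenation gives $v\rightarrow x_0$ for $f$, with $v\notin C$.

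Next I would introduce $S=\{w\in X\colon x_0\rightarrow w\}$ and record four properties. (i) $C\subset S$, since $x_0\rightarrow c$ for every $c\in C$ by chain transitivity of $f|_C$. (ii) $v\notin S$: if $x_0\rightarrow v$, then $v\rightarrow x_0\rightarrow v$, so $v\in CR(f)$ and $v\leftrightarrow x_0$, forcing $v\in C$, a contradiction. (iii) $S$ is closed (a routine approximation argument: truncate a chain from $x_0$ into $S$ and append a point of $S$ lying sufficiently close to its endpoint). (iv) $f(S)=S$: the inclusion $f(S)\subset S$ is immediate, since one may append $w,f(w)$ to a chain from $x_0$ to $w$; for $S\subset f(S)$, given $w\in S$ and $\delta>0$ one chooses $\eta\in(0,\delta]$ with $d(a,b)\le\eta$ implying $d(f^{-1}(a),f^{-1}(b))\le\delta$, takes an $\eta$-chain $(z_i)_{i=0}^k$ from $x_0$ to $w$, and observes that $(f^{-1}(z_i))_{i=0}^k$ is a $\delta$-chain from $f^{-1}(x_0)$ to $f^{-1}(w)$; since $f^{-1}(x_0)\in C$ and $x_0\rightarrow f^{-1}(x_0)$ by chain transitivity, concatenation gives $x_0\rightarrow f^{-1}(w)$, i.e.\ $f^{-1}(w)\in S$. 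Moreover, $x\rightarrow y$ with $x\in S$ clearly forces $x_0\rightarrow y$, hence $y\in S$; so by (iii), (iv) and the equivalence recorded in Section 1, $S$ is a closed $f$-invariant set with $f(S)=S$ that is chain stable.

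Now I would apply Lemma 5.2 to $S$ with $a=\tfrac{1}{2}d(v,S)$, which is positive by (ii) and (iii): there is an attractor $\Lambda$ for $f$ with $S\subset\Lambda\subset B_a(S)$. Since $a<d(v,S)$ we get $v\notin B_a(S)\supset\Lambda$, so $v\notin\Lambda$, while $C\subset S\subset\Lambda$ by (i). Finally I would apply Lemma 4.1 with the roles of its ``$\Lambda$'' and ``$S$'' played by $C$ and by our attractor $\Lambda$, in the parenthetical (``$f^{-1}(S)\subset S$'', ``$y\rightarrow x$'') branch: $f|_C$ is chain transitive; $C\subset\Lambda$ and $f^{-1}(\Lambda)=\Lambda$ because $f(\Lambda)=\Lambda$; $x_0\in C$, $v\in X\setminus\Lambda$ and $v\rightarrow x_0$; and $f$ has shadowing on $B_1(\Lambda)$ since it has shadowing on $X$. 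Lemma 4.1 then yields $C\subset\partial\Lambda$, which is the assertion.

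I expect the only real work to be in property (iv) (and to a lesser extent (iii)): the key device is the $f^{-1}$-conjugation that turns an $\eta$-chain from $x_0$ to $w$ into a $\delta$-chain from $f^{-1}(x_0)$ to $f^{-1}(w)$, together with $x_0\rightarrow f^{-1}(x_0)\in C$ to absorb the shift of the base point. The remaining steps are bookkeeping with the directional conventions of Lemmas 4.1 and 5.2 and the elementary equivalence between $v\rightarrow u$ for $f$ and $u\rightarrow v$ for $f^{-1}$.
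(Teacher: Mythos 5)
Your proposal is correct and takes essentially the same route as the paper: the paper also forms the forward chain-reachable set $S=\{w\in X\colon z\rightarrow w\text{ for some }z\in C\}$ (which coincides with your $\{w\colon x_0\rightarrow w\}$ by chain transitivity of $f|_C$), checks it is closed, satisfies $f(S)=S$, and is chain stable, then applies Lemma 5.2 to produce the attractor and Lemma 4.1 (in its $f^{-1}$-branch, with $y\rightarrow x$) to conclude $C\subset\partial\Lambda$. Your extra details on $f(S)=S$ and closedness merely fill in steps the paper leaves as ``easily seen.''
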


\begin{proof}
Since $C\notin\mathcal{C}_{\rm ini}(f)$, there are $x\in C$ and $y\in X\setminus C$ such that $y\rightarrow x$. Let $S$ be the set of $w\in X$ such that $z\rightarrow w$ for some $z\in C$. If $y\in S$, then $z\rightarrow y$ for some $z\in C$. Since $x,z\in C$, we have $x\rightarrow z$. By $y\rightarrow x$ and $x\rightarrow z$, we obtain $y\rightarrow z$ and so $y\in C$, a contradiction. It follows that $y\notin S$. Since $f(C)=C$, for any $v\in C$, we have $v=f(u)$ for some $u\in C$, which implies $u\rightarrow v$ and so $v\in S$. It follows that $C\subset S$. We easily see that $f(S)=S$. Since we have $q\in S$ for all $p\in S$ and $q\in X$ with $p\rightarrow q$, $S$ is chain stable. We fix $a>0$ with $d(y,S)>a$. Since $f(S)=S$ and $S$ is chain stable, by Lemma 5.2, there is an attractor $\Lambda$ for $f$ such that $S\subset\Lambda\subset B_a(S)$. Note that $C\subset\Lambda$, $x\in C$, and $y\in X\setminus\Lambda$. By Lemma 4.1, we conclude that $C\subset\partial\Lambda$, proving the lemma.
\end{proof}

Finally, we obtain the following corollary. A similar result can be found in \cite[Theorem 6.9]{AHK}.

\begin{cor}
Let $f\colon X\to X$ be a homeomorphism and let $C\in\mathcal{C}(f)$. If $f$ has shadowing on $X$, then the following conditions are equivalent
\begin{itemize}
\item[(1)] $C\notin\mathcal{C}_{\rm ini}(f)$,
\item[(2)] there is an attractor $\Lambda$ for $f$ such that $C\subset\partial\Lambda$.
\end{itemize}
\end{cor}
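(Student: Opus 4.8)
The plan is to prove the two implications separately, with the bulk of the work already done in Lemma 5.4. The implication $(1)\Rightarrow(2)$ is precisely the content of Lemma 5.4: if $C\notin\mathcal{C}_{\rm ini}(f)$ and $f$ has shadowing on $X$, then there is an attractor $\Lambda$ for $f$ such that $C\subset\partial\Lambda$. So nothing new is required there; one simply cites Lemma 5.4.

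For the converse $(2)\Rightarrow(1)$, suppose there is an attractor $\Lambda$ for $f$ with $C\subset\partial\Lambda$. The goal is to exhibit a point $y\in X\setminus C$ with $y\rightarrow x$ for some $x\in C$, since this is exactly the negation of $C\in\mathcal{C}_{\rm ini}(f)$. Fix any $x\in C\subset\partial\Lambda$. By Lemma 5.1, there is $z\in X\setminus\Lambda$ such that $z\rightarrow x$. It remains to observe that $z\notin C$: indeed $C\subset\partial\Lambda\subset\Lambda$, whereas $z\in X\setminus\Lambda$, so certainly $z\notin C$. Thus we have found $z\in X\setminus C$ with $z\rightarrow x$ and $x\in C$, which shows $C\notin\mathcal{C}_{\rm ini}(f)$. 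Note this direction does not even use the shadowing hypothesis on $X$.

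I do not anticipate a serious obstacle here: both directions reduce to lemmas already established (Lemma 5.4 for the harder direction, Lemma 5.1 plus the trivial inclusion $\partial\Lambda\subset\Lambda$ for the easy one). The only point requiring a moment's care is the verification that the point $z$ produced by Lemma 5.1 really lies outside $C$, which follows immediately from $z\in X\setminus\Lambda$ and $C\subset\Lambda$. If one wanted, one could also phrase $(2)\Rightarrow(1)$ contrapositively — if $C\in\mathcal{C}_{\rm ini}(f)$ and $C\subset\partial\Lambda$ for an attractor $\Lambda$, then by Lemma 5.1 some $z\in X\setminus\Lambda$ satisfies $z\rightarrow x$ with $x\in C$, and since $C$ is initial this forces $z\in C\subset\Lambda$, contradicting $z\notin\Lambda$ — but the direct argument above is cleaner.
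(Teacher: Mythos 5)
Your proposal is correct and follows essentially the same route as the paper: $(1)\Rightarrow(2)$ is cited from Lemma 5.4, and $(2)\Rightarrow(1)$ uses Lemma 5.1 to produce $z\in X\setminus\Lambda$ with $z\rightarrow x$ for some $x\in C$. Your explicit check that $z\notin C$ (via $C\subset\partial\Lambda\subset\Lambda$) is a small but welcome clarification that the paper leaves implicit.
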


\begin{proof}
By Lemma 5.4, (1) implies (2). Conversely, if there is an attractor $\Lambda$ for $f$ with $C\subset\partial\Lambda$, then by Lemma 5.1, there are $x\in C$ and $z\in X\setminus\Lambda$ such that $z\rightarrow x$. This implies $C\notin\mathcal{C}_{\rm ini}(f)$; therefore, the corollary has been proved. 
\end{proof}

\section{Remarks}

In this section, we make some concluding remarks.

\begin{rem}
\normalfont
Given a continuous map $f\colon X\to X$, a closed $f$-invariant subset $M$ of $X$ is called a {\em minimal set} for $f$ if $M=\omega(x,f)$ for all $x\in M$. Let $f\colon X\to X$ be a homeomorphism and let $M$ be a minimal set for $f$. If ${\rm int}[M]\ne\emptyset$, then letting $U={\rm int}[M]$, we see that $M=\bigcup_{i\ge0}f^{-i}(U)$; therefore, $M$ is a clopen subset of $X$. If $X$ is connected, this implies $X=M$ and so $f$ is a minimal homeomorphism. 
\end{rem}

\begin{rem}
\normalfont
Let $f\colon X\to X$ be a homeomorphism and let $C\in\mathcal{C}(f)$. If
\[
C\in\mathcal{C}_{\rm ini}(f)\:\text{(resp.\:}C\in\mathcal{C}_{\rm ter}(f)),
\]
then $\liminf_{i\to+\infty}d(f^i(x),C)>0$ (resp.\:$\liminf_{i\to-\infty}d(f^i(x),C)>0$) for all $x\in X\setminus C$.
\end{rem}

\begin{rem}
\normalfont
Given a continuous map $f\colon X\to X$ and a closed $f$-invariant subset $S$ of $X$, we define a subset $W^s(S)$ of $X$ by
\[
W^s(S)=\{x\in X\colon\lim_{i\to\infty}d(f^i(x),S)=0\}.
\]
We easily see that for any $x\in X$, $x\in W^s(S)$ if and only if $\omega(x,f)\subset S$.

Let $f\colon X\to X$ be a homeomorphism and let $S$ be a closed subset of $X$ such that $f(S)=S$ (and so $f(\partial S)=\partial S$). For any $x\in W^s(S)$, we have $\omega(x,f)\subset S$. If 
\[
\omega(x,f)\cap{\rm int}[S]\ne\emptyset,
\]
then we have $f^i(x)\in S$ for some $i\ge0$, which implies $x\in S$. It follows that if $x\in W^s(S)\setminus S$, then
\[
\omega(x,f)\subset S\setminus{\rm int}[S]=\partial S;
\]
therefore, $x\in W^s(\partial S)$. In other words, we obtain $W^s(S)\setminus S\subset W^s(\partial S)$.
\end{rem}

\begin{rem}
\normalfont
Let $f\colon X\to X$ be a continuous map. If $X$ is locally connected and ${\rm int}[CR(f)]\ne\emptyset$, then there are $x\in CR(f)$ and an open connected subset $V$ of $X$ such that $x\in V\subset CR(f)$. By taking $C\in\mathcal{C}(f)$ with $x\in C$, we have $V\subset C$ and so ${\rm int}[C]\ne\emptyset$.
\end{rem}

\appendix

\section{}

The aim of this appendix is to prove the following generalization of \cite[Theorem 2.4]{H}.

\begin{thm}
Let $f\colon X\to X$ be a homeomorphism and let $\Lambda$ be an attractor for $f$. If $X$ is locally connected, then $\partial\Lambda$ has only finitely many connected components.
\end{thm}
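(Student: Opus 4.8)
The plan is to exhibit $\partial\Lambda$ as a decreasing intersection of mutually homeomorphic compact sets coming from a well‑chosen trapping region, and then to bound its number of components by that of a single such set. First I would fix an open set $U$ with $f(\overline U)\subseteq U$ and $\Lambda=\bigcap_{n\ge0}f^n(U)$; then $f^n(\overline U)$ decreases (in the Hausdorff sense) to $\Lambda$, so $f^n(\overline U)$ is contained in any prescribed neighbourhood of $\Lambda$ once $n$ is large. Using local connectedness, cover the compact set $\Lambda$ by finitely many open connected sets with closures contained in $U$ and let $W$ be their union: $W$ is open, $\Lambda\subseteq W$, $\overline W\subseteq U$, and $W$ (hence $\overline W$) has only finitely many connected components. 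Choosing $n\ge1$ with $f^n(\overline U)\subseteq W$ and setting $g=f^n$, one gets $g(\overline W)\subseteq W$, and since $\{f^n(U)\}$ is a decreasing sequence with intersection $\Lambda$, one checks that $\bigcap_{k\ge0}g^k(\overline W)=\Lambda$.

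Next I would establish the collar identity. Put $H=\overline W\setminus\mathrm{int}[\Lambda]$, a compact set with $\partial\Lambda=\Lambda\setminus\mathrm{int}[\Lambda]\subseteq H$. Because $\mathrm{int}[\Lambda]$ is $g$‑invariant, $g^k(H)=g^k(\overline W)\setminus\mathrm{int}[\Lambda]$, the sets $g^k(H)$ form a decreasing sequence, they are pairwise homeomorphic via powers of $g$, and
\[
\bigcap_{k\ge0}g^k(H)=\Bigl(\bigcap_{k\ge0}g^k(\overline W)\Bigr)\setminus\mathrm{int}[\Lambda]=\Lambda\setminus\mathrm{int}[\Lambda]=\partial\Lambda .
\]
I would then use the elementary fact that a decreasing sequence of nonempty compact sets, each with at most $m$ connected components, has intersection with at most $m$ connected components: otherwise one separates $m+1$ components of the intersection by disjoint clopen sets of its (compact, metrizable, totally disconnected) quotient by components, enlarges their preimages to disjoint open subsets of $X$ whose union contains the intersection, and observes that all but finitely many members of the sequence lie in this union and are thereby split into at least $m+1$ pieces. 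Applied to $\{g^k(H)\}$ this gives that $\partial\Lambda$ has at most as many components as $H$; applied to $\{g^k(\overline W)\}$ it shows, as a byproduct, that $\Lambda$ itself has finitely many components.

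It therefore suffices to prove that $H=\overline W\setminus\mathrm{int}[\Lambda]$ has finitely many components. If $\mathrm{int}[\Lambda]=\emptyset$ then $H=\overline W$ and we are done by the choice of $W$; the remaining case $\mathrm{int}[\Lambda]\ne\emptyset$ is the main obstacle. I would argue by contradiction: if $H$ had infinitely many components, its quotient by components is an infinite compact metrizable totally disconnected space, hence has an accumulation point, which yields a component $C$ of $H$ and distinct components $C_j\ne C$ with points $p_j\in C_j$ converging to some $p\in C$. Taking a small connected neighbourhood of $p$ that lies in $\mathrm{int}[\overline W]$ and misses $\mathrm{int}[\Lambda]$ shows (using local connectedness) that this cannot happen unless $p\in\partial\Lambda$ and $p$ is in the closure of $\mathrm{int}[\Lambda]$, and moreover that some small connected neighbourhood $V$ of $p$ with $\overline V\subseteq W$ already has $V\setminus\mathrm{int}[\Lambda]$ splitting into infinitely many components accumulating at $p$.

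The heart of the proof is to rule out this local phenomenon, and I expect it to be by far the hardest step; it is also the only place where ``attractor'' is genuinely used, since for a merely closed invariant set $\Lambda$ the boundary $\partial\Lambda$ can have infinitely many components even in a Peano continuum. Here I would again exploit that $g^k(\overline W)$ shrinks to $\Lambda$: transporting the pieces of $V\setminus\mathrm{int}[\Lambda]$ forward by $g$ and intersecting them with the collar decomposition $H=\partial\Lambda\sqcup(\overline W\setminus\Lambda)$, one should show that the ``collar'' pieces are eventually absorbed while any piece meeting $\partial\Lambda$ must lie in a single component of $\partial\Lambda$, so that infinitely many local components near $p$ would contradict the already established finiteness of the number of components of $\partial\Lambda$ obtained from the collar identity — the delicate point being to carry out this localization so that it feeds back consistently rather than circularly. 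This amounts to understanding precisely how the invariant open set $\mathrm{int}[\Lambda]$ can dissect $\overline W$ near $\partial\Lambda$, which is where the attractor structure must be brought to bear in full.
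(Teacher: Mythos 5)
Your reduction is sound as far as it goes: the sets $g^k(H)$ with $H=\overline W\setminus\mathrm{int}[\Lambda]$ are indeed compact, decreasing, pairwise homeomorphic, with intersection $\partial\Lambda$, and your lemma that a decreasing intersection of compacta with at most $m$ components has at most $m$ components is correct. But the proof has a genuine gap exactly where you flag it: you never establish that $H$ has finitely many connected components, and this is not a technical remainder but the whole content of the theorem. Removing the open set $\mathrm{int}[\Lambda]$ from $\overline W$ destroys the finiteness you arranged for $\overline W$, and the strategy you sketch for the case $\mathrm{int}[\Lambda]\ne\emptyset$ — ruling out infinitely many local pieces of $V\setminus\mathrm{int}[\Lambda]$ near a point of $\partial\Lambda$ by appealing to ``the already established finiteness of the number of components of $\partial\Lambda$ obtained from the collar identity'' — is circular, since that finiteness is available only after one knows $H$ has finitely many components. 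Nothing in the proposal shows how the attractor structure controls how $\mathrm{int}[\Lambda]$ dissects a neighbourhood of $\partial\Lambda$, so the argument does not close.

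The paper's proof avoids this trap by never carving $\mathrm{int}[\Lambda]$ out of anything: it approximates $\partial\Lambda$ by sets that stay entirely off $\Lambda$. Concretely, one takes the open ``annulus'' $U\setminus f^{2}(\overline U)$, whose connected components are open by local connectedness; since $f(\partial U)$ is compact and contained in this annulus, only finitely many components meet it, and their union $B$ has finitely many components by construction (no finiteness claim about $\Lambda$ or its interior is needed). One then shows, using the elementary fact that a connected set meeting both $S$ and $X\setminus S$ meets $\partial S$ (Lemma A.1), that $f^{i}(B)$ converges to $\partial\Lambda$ in the Hausdorff sense: $\partial\Lambda\subset U_{2\epsilon}(f^{i}(B))$ because near any boundary point the shrinking sets $f^{i+1}(U)$ must have boundary, and $f^{i}(B)\subset U_{2\epsilon}(\partial\Lambda)$ because $f^{i}(\overline U)\setminus\Lambda$ is eventually squeezed against $\partial\Lambda$. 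Your soft limit argument is essentially the same as the one the paper uses implicitly at this point; what is missing in your write-up is the finiteness of the approximating sets, which the paper obtains for free by this choice of $B$. If you want to salvage your approach, replace $H$ by such a set disjoint from $\Lambda$ (e.g.\ the finitely many components of $W\setminus g(\overline W)$ meeting $g(\partial W)$) rather than by $\overline W\setminus\mathrm{int}[\Lambda]$.
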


The proof of the following lemma is left as an exercise for the reader.

\begin{lem}
Let $A$ be a connected subset of $X$. For a subset $S$ of $X$, if $A\cap S\ne\emptyset$ and $A\setminus S\ne\emptyset$, then $A\cap\partial S\ne\emptyset$.
\end{lem}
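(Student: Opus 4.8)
The plan is to give the standard connectedness argument by contradiction: I would suppose $A\cap\partial S=\emptyset$ and derive a separation of $A$, contradicting its connectedness. The starting point is the disjoint decomposition of the ambient space
$X={\rm int}[S]\sqcup\partial S\sqcup{\rm int}[X\setminus S]$,
in which ${\rm int}[S]$ and ${\rm int}[X\setminus S]$ are open and disjoint and $\partial S$ is exactly the remaining set; this uses only the definition $\partial S=\overline{S}\setminus{\rm int}[S]$ together with the identity $\partial S=\partial(X\setminus S)$.

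Under the assumption $A\cap\partial S=\emptyset$, I would show that $A$ is covered by the two open sets ${\rm int}[S]$ and ${\rm int}[X\setminus S]$ and that it meets both. First, fix $x\in A\cap S$, which is nonempty by hypothesis; since $x\in S\subset\overline{S}$ and $x\notin\partial S$, we get $x\in{\rm int}[S]$, so $A\cap{\rm int}[S]\ne\emptyset$. Second, fix $y\in A\setminus S$, which is also nonempty by hypothesis; since $y\in X\setminus S\subset\overline{X\setminus S}$ and $y\notin\partial S=\partial(X\setminus S)$, we get $y\in{\rm int}[X\setminus S]$, so $A\cap{\rm int}[X\setminus S]\ne\emptyset$. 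Because $A\cap\partial S=\emptyset$, every point of $A$ lies in ${\rm int}[S]\cup{\rm int}[X\setminus S]$.

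Setting $U=A\cap{\rm int}[S]$ and $V=A\cap{\rm int}[X\setminus S]$, I would then observe that $U$ and $V$ are relatively open in $A$, disjoint, nonempty, and satisfy $A=U\sqcup V$; this is a separation of $A$, contradicting its connectedness. Hence $A\cap\partial S\ne\emptyset$. I do not expect any genuine obstacle here: the only steps requiring care are the two membership deductions $x\in{\rm int}[S]$ and $y\in{\rm int}[X\setminus S]$, and each of these reduces to the elementary identity $\overline{T}\setminus\partial T={\rm int}[T]$ applied to $T=S$ and to $T=X\setminus S$, respectively.
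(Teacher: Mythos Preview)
Your argument is correct and is the standard proof of this elementary topological fact. The paper does not actually supply a proof of this lemma---it explicitly leaves it as an exercise for the reader---so there is nothing to compare against; your write-up would serve perfectly well as the omitted proof.
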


For $x\in X$ and $r>0$, we denote by $U_r(x)$ the open $r$-ball centered at $x$:
\[
U_r(x)=\{y\in X\colon d(x,y)<r\}.
\]
For a subset $S$ of $X$ and $r>0$, we denote by $U_r(S)$ the open $r$-neighborhood of $S$:
\[
U_r(S)=\{x\in X\colon d(x,S)<r\}.
\]
By \cite[Lemma 6.4.2]{AH}, we know that if $X$ is locally connected, then there are a compatible metric $d$ on $X$ and  $\epsilon_0>0$ such that $U_\epsilon(x)$ is connected for all $x\in X$ and $0<\epsilon\le\epsilon_0$.

\begin{proof}[Proof of Theorem A.1]
Since $\Lambda$ is an attractor for $f$, there is an open subset $U$ of $X$ such that $f(\overline{U})\subset U$ and $\Lambda=\bigcap_{i\ge0}f^i(U)$. Note that
\[
f(\partial U)=f(\overline{U}\setminus U)=f(\overline{U})\setminus f(U)\subset U\setminus f^2(\overline{U}).
\]
Let $\mathcal{C}$ denote the set of connected components of $U\setminus f^2(\overline{U})$. Since $X$ is locally connected and $U\setminus f^2(\overline{U})$ is open in $X$, every $C\in\mathcal{C}$ is open in $X$. Because $f(\partial U)$ is a compact subset of $X$, the set
\[
\mathcal{D}=\{C\in\mathcal{C}\colon f(\partial U)\cap C\ne\emptyset\}
\]
is finite. Define
\[
B=\bigcup_{C\in\mathcal{D}}C
\]
and note that
\begin{itemize}
\item $f(\partial U)\subset B\subset U\setminus f^2(\overline{U})$,
\item $\mathcal{D}$ coincides with the set of connected components of $B$.
\end{itemize}
Let $\epsilon_0>0$ be a constant as above. To complete the proof, it suffices to show that for any $0<\epsilon\le\epsilon_0$, there exists $l\ge0$ such that
\[
\partial\Lambda\subset U_{2\epsilon}(f^i(B))\quad\text{and}\quad f^i(B)\subset U_{2\epsilon}(\partial\Lambda)
\]
for all $i\ge l$. If this holds, then since $f^i(B)$ is homeomorphic to $B$ and so has at most $|\mathcal{D}|$ connected components for each $i\ge0$, $\partial\Lambda$ also has at most $|\mathcal{D}|$ connected components.

Given any $x\in\partial\Lambda$, we have $U
_\epsilon(x)\setminus\Lambda\ne\emptyset$. Let $y\in U
_\epsilon(x)\setminus\Lambda$. Since $U\supset f(U)\supset\cdots$
and $\Lambda=\bigcap_{i\ge0}f^i(U)$, there is $j_x\ge0$ such that $y\in U_\epsilon(x) \setminus f^{i+1}(U)$ for all $i\ge j_x$. For every $i\ge j_x$, since
\[
x\in U_\epsilon(x)\cap\Lambda\subset U_\epsilon(x)\cap f^{i+1}(U),
\]
by Lemma A.1, we obtain
\[
U_\epsilon(x)\cap f^{i+1}(\partial U)=U_\epsilon(x)\cap\partial f^{i+1}(U)\ne\emptyset;
\]
therefore, $x\in U_\epsilon(f^{i+1}(\partial U))$. Since $\partial\Lambda$ is a compact subset of $X$, there are $x_1,x_2,\dots,x_n\in\partial\Lambda$ such that $\partial\Lambda\subset\bigcup_{m=1}^n U_\epsilon(x_m)$. Letting $j=\max_{1\le m\le n}j_{x_m}$, we obtain
\[
\partial\Lambda\subset U_{2\epsilon}(f^{i+1}(\partial U))\subset U_{2\epsilon}(f^i(B))
\]
for all $i\ge j$.

Let
\[
\Lambda_\epsilon=\{y\in\Lambda\colon d(y,\partial\Lambda)\ge\epsilon\}.
\]
For $y\in\Lambda_\epsilon$, if $U_\epsilon(y)\setminus\Lambda\ne\emptyset$, then since $y\in U_\epsilon(y)\cap\Lambda$, by using Lemma A.1, we obtain $U_\epsilon(y)\cap\partial\Lambda\ne\emptyset$; therefore, $d(y,\partial\Lambda)<\epsilon$, a contradiction. It follows that $U_\epsilon(y)\subset\Lambda$ for all $y\in\Lambda_\epsilon$. Since $\overline{U}\supset f(\overline{U})\supset\cdots$
and $\Lambda=\bigcap_{i\ge0}f^i(\overline{U})$, by compactness, there is $k\ge0$ such that
\[
f^i(\overline{U})\subset U_\epsilon(\Lambda) 
\]
for all $i\ge k$. Given any $i\ge k$ and $x\in f^i(\overline{U})\setminus\Lambda$, we take $y\in\Lambda$ such that $x\in U_\epsilon(y)$. If $y\in\Lambda_\epsilon$, then we obtain $x\in U_\epsilon(y)\subset\Lambda$, a contradiction. It follows that $y\not\in\Lambda_\epsilon$ and so $d(y,\partial\Lambda)<\epsilon$, which implies $x\in U_{2\epsilon}(\partial\Lambda)$. Since $i\ge k$ and $x\in f^i(\overline{U})\setminus\Lambda$ are arbitrary, we obtain
\[
f^i(B)\subset f^i(\overline{U})\setminus\Lambda\subset U_{2\epsilon}(\partial\Lambda)
\]
for all $i\ge k$. This completes the proof of the above claim, and thus the theorem has been proved.
\end{proof}

\begin{rem}
\normalfont
Let $S$ be a subset of $X$. The above proof shows that if $S$ is a closed subset of $X$ with
\[
f(\partial U)\subset S\subset\overline{U}\setminus\Lambda,
\]
then $\lim_{i\to\infty}f^i(S)=\partial\Lambda$ with respect to the Hausdorff distance.
\end{rem}

\begin{rem}
\normalfont
Let $S$ be a closed subset of $X$. It is easy to see that if $X$ and $\partial S$ each have only finitely many chain components, then $S$ also has only finitely many chain components.
\end{rem}

\end{document}